\newcommand{\eChar}{\begin{enumerate}[(i)]}
\newcommand{\eCharR}{\begin{enumerate}[(a)]}
\newcommand{\eBr}{\begin{enumerate}[(1)]}
\newcommand\floor[1]{\left\lfloor #1\right\rfloor}
\newcommand\ceil[1]{\left\lceil #1\right\rceil}
\title
{Ollivier-Ricci idleness functions of graphs}
\author[1]{D. P. Bourne}
\author[1]{D. Cushing}
\author[2]{S. Liu}
\author[3]{F. M\"unch}
\author[1]{N. Peyerimhoff}
\affil[1]{Department of Mathematical Sciences, Durham University}
\affil[2]{School of Mathematical Sciences, University of Science and Technology of China}
\affil[3]{Institute of Mathematics, Universit\"at Potsdam }
\date{\today}
\theoremstyle{plain}
\newtheorem{lemma}{Lemma}[section]
\newtheorem{theorem}[lemma]{Theorem}
\newtheorem{corollary}[lemma]{Corollary}
\theoremstyle{definition}
\newtheorem{definition}{Definition}[section]
\newtheorem{rem}[lemma]{Remark}
\numberwithin{equation}{section}
\begin{document}

\maketitle

\begin{abstract}
  We study the Ollivier-Ricci curvature of graphs as a function of the
  chosen idleness. We show that this idleness function is concave and
  piecewise linear with at most $3$ linear parts, with at most $2$
  linear parts in the case of a regular graph. We then apply our
  result to show that the idleness function of the Cartesian product
  of two regular graphs is completely determined by the idleness
  functions of the factors.
\end{abstract}

\section{Introduction and statement of results}
Ricci curvature plays a very important role in the study of Riemannian manifolds. In the discrete setting of graphs, there is very active recent research on various types of Ricci curvature notions and their applications.

In \cite{Oll} Ollivier developed a notion of Ricci curvature of Markov
chains valid on metric spaces including graphs. In this notion an {\it idleness} parameter, $p\in [0,1],$ must be set in order to obtain a curvature $\kappa_{p}$. Ollivier considered idleness $0$ and $\frac{1}{2}.$ For graphs, Ollivier's notion for idleness 0 has been studied further in \cite{BM15, Im, JL14, LY10, P}. In \cite{OllVil} Ollivier and Villani considered idleness $\frac{1}{d+1},$ where $d$ is the degree of a regular graph, in order to investigate the curvature of the hypercube. In \cite{LLY11}, Lin, Lu, and Yau modified the definition of Ollivier-Ricci curvature to compute the derivative of the curvature with respect to the idleness, which they denote by $\kappa$.

We will show that for a regular graph the following holds:
$$\kappa = 2\kappa_{\frac{1}{2}} = {\small \frac{d+1}{d}}\kappa_{\frac{1}{d+1}}.$$
Therefore some of these different curvature notions are related to
each other by scaling factors.

In \cite{BM15}, Bhattacharya and Mukherjee derive exact expressions of Ollivier-Ricci curvature for bipartite graphs in the special case of idleness $p=0$
and for graphs of girth at least $5$. They use this result to classify all
graphs with $\kappa_{0} = 0$ for all edges (called `Ricci flat' in
their paper) and girth at least $5$. There is a small overlap between some
of our methods and theirs in this paper (for example they discuss the
existence of integer-valued optimal Kantorovich potentials in
the special case of vanishing idleness ($p = 0$)).

To our knowledge, the \emph{global} piecewise linear structure of the
function $p \mapsto \kappa_{p}$ has not yet been established and
the only concrete examples in the literature, where the full idleness function is
computed are the hypercube and the
complete graphs; see \cite{LLY11}. However, some properties of this
function have been discussed. In \cite{LLY11}, it was shown that the
idleness function is concave. It was shown in \cite{LoRo} that
$\kappa_{p}$ is linear close to idleness $p=1$, and in \cite{Smith14}
it was shown that $\kappa_{p}$ is linear if a certain condition is
satisfied (see the introductory part of Section \ref{first}).

Throughout this article, let $G=(V,E)$ be a locally finite graph with
vertex set $V$, edge set $E$, and which contains no multiple edges or
self loops. Let $d_x$ denote the degree of the vertex $x\in V$ and 
$d(x,y)$ denote the length of the shortest path between two vertices
$x$ and $y$, that is, the combinatorial distance. We denote the existence of an edge between $x$ and $y$ by $x\sim y.$

We define the following probability measures $\mu_x$ for any
$x\in V,\: p\in[0,1]$:
$$\mu_x^p(z)=\begin{cases}p,&\text{if $z = x$,}\\
\frac{1-p}{d_x},&\text{if $z\sim x$,}\\
0,& \mbox{otherwise.}\end{cases}$$
Let $W_{1}$ denote the 1-Wasserstein distance between two probability measures on $V$, see \cite{Vill03} page 211.  The $ p-$Ollivier-Ricci curvature of an edge $x\sim y$ in $G=(V,E)$ is
$$\kappa_{ p}(x,y)=1-W_1(\mu^{ p}_x,\mu^{ p}_y).$$
Y. Lin, L. Lu, and S.T. Yau introduced in \cite{LLY11} the following
Ollivier-Ricci curvature:
$$\kappa(x,y) = \lim_{ p\rightarrow 1}\frac{\kappa_{ p}(x,y)}{1- p}.$$
Note that their curvature notion does not have an idleness index, which distinguishes their notion from the idleness function  $p \mapsto \kappa_{p}(x,y)$ in this paper, which we call the {\it Ollivier-Ricci idleness function}.
We will show that $$ \kappa_p(x,y) = (1-p)\kappa(x,y) $$for all $p \in \left [\frac{1}{\max\{d_x,d_y\}+1},1 \right]$ and that $\kappa_{0}(x,y)\leq \kappa(x,y)\leq \kappa_{0}(x,y)+\frac{2}{\max\{d_x,d_y\}}.$ Observe that $\kappa(x,y) = -\kappa'_{1}(x,y).$

Next, we give some examples of graphs and their Ollivier-Ricci idleness function at a particular edge $x \sim y$.
\\
\\
{\bf Examples:$\quad$} Below is the one-path and a plot of the corresponding idleness function:
\begin{center}
\begin{tikzpicture}[x=1.5cm, y=1.5cm,
	vertex/.style={
		shape=circle, fill=black, inner sep=1.5pt	
	}
]

\node[vertex, label=below:$x$] (1) at (0, 1) {};
\node[vertex, label=below:$y$] (2) at (1, 1) {};

\draw (1) -- (2);

\draw [<->] (3,2.2) -- (3,0) -- (6.2,0);
\draw (3,0) -- (4.5, 2) -- (6, 0);

\draw[dotted] (4.5, 2) -- (4.5, 0);
\draw[dotted] (3, 2) -- (4.5, 2);

\node at (4.5,-0.2) {$\frac{1}{2}$};
\node at (6,-0.2) {$1$};
\node at (2.8, 2) {$1$};
\node at (2.85, -0.15) {$0$};
\node at (2.5, 1.3) {$\kappa_{p}$};
\node at (5,-0.4) {$p$};


\end{tikzpicture}
\end{center}

We now present the idleness function for $3-,4-$ and $5-$ cycles:

\begin{center}
\begin{tikzpicture}[x=1.5cm, y=1.5cm,
	vertex/.style={
		shape=circle, fill=black, inner sep=1.5pt	
	}
]

\node[vertex, label=below:$x$] (1) at (0, 0) {};
\node[vertex] (3) at (0.5, 0.866) {};
\node[vertex, label=below:$y$] (2) at (1, 0) {};

\draw (1) -- (2);
\draw (1) -- (3);
\draw (3) -- (2);

\draw [<->] (3,2.2) -- (3,0) -- (6.2,0);
\draw (3,1) -- (4, 2) -- (6, 0);

\draw[dotted] (4, 2) -- (4, 0);
\draw[dotted] (3, 2) -- (4, 2);

\node at (4,-0.2) {$\frac{1}{3}$};
\node at (6,-0.2) {$1$};
\node at (2.8, 2) {$1$};
\node at (2.8, 1) {$\frac{1}{2}$};
\node at (2.85, -0.15) {$0$};
\node at (2.5, 1.3) {$\kappa_{p}$};
\node at (5,-0.4) {$p$};

\end{tikzpicture}
\end{center}

\begin{center}
\begin{tikzpicture}[x=1.5cm, y=1.5cm,
	vertex/.style={
		shape=circle, fill=black, inner sep=1.5pt	
	}
]

\node[vertex, label=below:$x$] (bot1) at (0, 0) {};
\node[vertex] (bot2) at (1, 1) {};
\node[vertex] (top1) at (0, 1) {};
\node[vertex, label=below:$y$] (top2) at (1, 0) {};

\foreach \i in {1, 2}{
	\foreach \j in {1, 2}{
		\draw (bot\i) -- (top\j);
	}
}

\draw [<->] (3,2.2) -- (3,0) -- (6.2,0);
\draw (3,0) -- (4, 1.333) -- (6, 0);

\draw[dotted] (4, 1.333) -- (4, 0);
\draw[dotted] (3, 1.333) -- (4, 1.333);

\node at (4,-0.2) {$\frac{1}{3}$};
\node at (6,-0.2) {$1$};
\node at (2.8, 1.333) {$\frac{2}{3}$};
\node at (2.85, -0.15) {$0$};
\node at (2.5, 1.3) {$\kappa_{p}$};
\node at (5,-0.4) {$p$};

\end{tikzpicture}
\end{center}

\begin{center}
\begin{tikzpicture}[x=1.5cm, y=1.5cm,
	vertex/.style={
		shape=circle, fill=black, inner sep=1.5pt	
	}
]

\node[vertex] (1) at (0.5, 1.363) {};
\node[vertex] (2) at (0, 1) {};
\node[vertex, label=below:$x$] (3) at (0.189, 0.411) {};
\node[vertex, label=below:$y$] (4) at (0.811, 0.411) {};
\node[vertex] (5) at (1, 1) {};

\draw (1) -- (2);
\draw (2) -- (3);
\draw (3) -- (4);
\draw (4) -- (5);
\draw (5) -- (1);

\draw [<->] (3,2.2) -- (3,0) -- (6.2,0);
\draw (3,0) -- (4, 0.667) -- (6, 0);

\draw[dotted] (4, 0.667) -- (4, 0);
\draw[dotted] (3, 0.667) -- (4, 0.667);

\node at (4,-0.2) {$\frac{1}{3}$};
\node at (6,-0.2) {$1$};
\node at (2.8, 0.667) {$\frac{1}{3}$};
\node at (2.85, -0.15) {$0$};
\node at (2.5, 1.3) {$\kappa_{p}$};
\node at (5,-0.4) {$p$};

\end{tikzpicture}
\end{center}
For cycles of length $6$ or greater the idleness function at every
edge vanishes identically (we call those edges \emph{bone idle}; see Section 7).

So far we have only seen idleness functions with at most $2$ linear
parts. We will show that if $d_{x} = d_{y},$ then this is always the
case. However, if $d_{x}\neq d_{y}$, then $3$ linear parts may
occur, as shown in the following example:



\begin{center}
\begin{tikzpicture}[x=1.5cm, y=1.5cm,
	vertex/.style={
		shape=circle, fill=black, inner sep=1.5pt	
	}
]

\node[vertex, label=below:$x$] (1) at (0, 0) {};
\node[vertex] (3) at (1, 1) {};
\node[vertex] (4) at (0, 1) {};
\node[vertex, label=below:$y$] (2) at (1, 0) {};
\node[vertex] (5) at (-1, 0) {};

\draw (1) -- (2);
\draw (1) -- (5);
\draw (3) -- (4);
\draw (1) -- (4);
\draw (2) -- (3);
\draw (5) -- (4);

\draw [<->] (3,2.2) -- (3,0) -- (6.2,0);
\draw (3,0) -- (3.429, 0.571) -- (3.75, 0.75) -- (6, 0);

\draw[dotted] (3.429, 0.571) -- (3.429, 0);
\draw[dotted] (3.75, 0.75) -- (3.75, 0);

\node at (3.429,-0.2) {$\frac{1}{7}$};
\node at (3.75,-0.2) {$\frac{1}{4}$};
\node at (6,-0.2) {$1$};
\node at (2.85, -0.15) {$0$};
\node at (2.5, 1.3) {$\kappa_{p}$};
\node at (5,-0.4) {$p$};

\end{tikzpicture}
\end{center}





In fact the Ollivier-Ricci idleness function is piecewise linear with
\emph{at most} 3 parts always, a fundamental fact which is included in
the following theorem (our main result):

\begin{theorem}\label{main}
Let $G=(V,E)$ be a locally finite graph. Let $x,y\in V$ with $x\sim y.$ Then the function $p \mapsto \kappa_{p}(x,y)$ is concave and piecewise linear over $[0,1]$ with at most $3$ linear parts. Furthermore $\kappa_{p}(x,y)$ is linear on the intervals
\begin{equation*}
\left[0,\frac{1}{{\rm{lcm}}(d_{x},d_{y})+1}\right]\:\:\: {\rm and} \:\:\:\left[\frac{1}{\max(d_{x},d_{y})+1},1\right].
\end{equation*}
Thus, if we have the further condition $d_{x}=d_{y}$, then $\kappa_{p}(x,y)$ has at most two linear parts.
\end{theorem}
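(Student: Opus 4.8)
The plan is to understand the idleness function $p \mapsto \kappa_p(x,y)$ through the dual (Kantorovich) formulation of the Wasserstein distance, since the concavity and piecewise-linearity must ultimately come from a linear-programming structure. Recall that by Kantorovich duality,
\begin{equation*}
W_1(\mu_x^p,\mu_y^p) = \sup_{\phi \in \mathrm{Lip}(1)} \sum_{z \in V} \phi(z)\left(\mu_x^p(z) - \mu_y^p(z)\right),
\end{equation*}
where the supremum runs over all $1$-Lipschitz functions $\phi$ on $V$. The key observation is that both $\mu_x^p$ and $\mu_y^p$ depend \emph{affinely} on $p$: we have $\mu_x^p(z) - \mu_y^p(z) = (1-p)\bigl(\mu_x^0(z) - \mu_y^0(z)\bigr) + p\,(\mathbf{1}_{z=x} - \mathbf{1}_{z=y})$, so for each fixed $\phi$ the quantity $\sum_z \phi(z)(\mu_x^p(z) - \mu_y^p(z))$ is an affine function of $p$. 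Taking the supremum over $\phi$ of a family of affine functions yields a convex function of $p$; hence $W_1(\mu_x^p,\mu_y^p)$ is convex in $p$, and therefore $\kappa_p(x,y) = 1 - W_1(\mu_x^p,\mu_y^p)$ is concave. This disposes of the concavity claim cleanly and is the natural first step.

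For piecewise linearity, I would argue that as $p$ ranges over $[0,1]$ the optimal Kantorovich potential can be chosen from a finite set, and that the relevant linear program has vertices whose optimal basis changes only finitely often. The cleaner route is to invoke that the dual feasible region (the polytope of $1$-Lipschitz functions normalized appropriately, modulo additive constants) is a fixed polytope independent of $p$, while the objective vector depends affinely on $p$; the optimal value of such a parametric linear program is known to be convex and piecewise linear in the parameter $p$, with breakpoints where the optimal vertex changes. This gives convexity of $W_1$ together with piecewise linearity automatically. The remaining work is then entirely about \emph{bounding the number of pieces} and \emph{locating the breakpoints} at the stated values $\tfrac{1}{\mathrm{lcm}(d_x,d_y)+1}$ and $\tfrac{1}{\max(d_x,d_y)+1}$.

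The linearity on $\left[\tfrac{1}{\max(d_x,d_y)+1},1\right]$ should follow from the relation $\kappa_p(x,y) = (1-p)\kappa(x,y)$ already asserted in the excerpt to hold on exactly this interval, which makes $\kappa_p$ manifestly linear there (with slope $-\kappa(x,y)$ and value $0$ at $p=1$). For the linearity on $\left[0,\tfrac{1}{\mathrm{lcm}(d_x,d_y)+1}\right]$, I expect the argument to identify, for small $p$, an optimal transport plan (or optimal potential) whose combinatorial type is stable: when $p$ is small the measures $\mu_x^p,\mu_y^p$ are close to $\mu_x^0,\mu_y^0$, and one should show that an optimal plan for $p=0$ can be perturbed affinely in $p$ without changing the support structure, up to the threshold where some mass constraint becomes tight. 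The threshold $\tfrac{1}{\mathrm{lcm}(d_x,d_y)+1}$ strongly suggests that the breakpoint occurs precisely where the idle masses $p$ at $x$ and $y$ first balance against the transported masses $\tfrac{1-p}{d_x}$ and $\tfrac{1-p}{d_y}$ in a common denominator; equating $p$ with a rational combination of $\tfrac{1-p}{d_x}$ and $\tfrac{1-p}{d_y}$ naturally produces $\mathrm{lcm}(d_x,d_y)$ in the denominator.

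The main obstacle I anticipate is the sharp bound of \emph{three} pieces and the exact breakpoint locations, rather than the qualitative convexity/piecewise-linearity. Establishing these requires a careful analysis of which integer-valued (or rational-valued) optimal Kantorovich potentials are available and how the optimal one transitions as $p$ grows, exploiting the special structure of the two-point perturbation $\mathbf{1}_{z=x} - \mathbf{1}_{z=y}$ in the objective. The regular case $d_x = d_y$ collapses $\mathrm{lcm}(d_x,d_y) = \max(d_x,d_y) = d_x$, so the two stated intervals share the common endpoint $\tfrac{1}{d_x+1}$ and their union covers all of $[0,1]$, immediately forcing at most two linear parts; this corollary should be essentially a one-line consequence once the general three-piece statement and the breakpoint formulas are in hand. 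I would therefore concentrate the hardest effort on proving that no breakpoints occur strictly inside $\left[0,\tfrac{1}{\mathrm{lcm}(d_x,d_y)+1}\right]$ and strictly inside $\left[\tfrac{1}{\max(d_x,d_y)+1},1\right]$, leaving at most one breakpoint in the middle interval and hence at most three pieces overall.
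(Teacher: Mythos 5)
Your concavity argument is correct and is exactly the paper's (Remark \ref{concavity}): the dual objective is affine in $p$ for each fixed potential, so $W_1$ is a supremum of affine functions, hence convex. But the core of the theorem --- the bound of \emph{three} linear parts --- is not established by your proposal. Generic parametric-LP reasoning gives convexity plus piecewise linearity with \emph{finitely many} pieces at best (and even that needs an argument on an infinite graph, since the polytope of $1$-Lipschitz functions is not finite-dimensional; one must first note the objective only sees $\phi$ on $B_1(x)\cup B_1(y)$). The missing idea, which is the engine of the paper's Theorem \ref{linear}, is this: one first proves that an \emph{integer-valued} optimal Kantorovich potential always exists (by taking $\lfloor\Phi\rfloor$ of an optimal $\Phi$ and using complementary slackness to verify optimality is preserved on each connected component of the support of an optimal plan). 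Normalizing $\phi(y)=0$, the $1$-Lipschitz condition forces $\phi(x)=j\in\{-1,0,1\}$, and for each fixed $j$ the supremum of the dual objective over the class $\mathcal{A}_j$ is a single affine function $f_j(p)=\bigl(p-\frac{1-p}{d_y}\bigr)j+\frac{1-p}{d_xd_y}c_j$ with $c_j=\sup_{\mathcal{A}_j}F$. Hence $W_1(\mu_x^p,\mu_y^p)=\max\{f_{-1}(p),f_0(p),f_1(p)\}$ is a maximum of exactly three lines, which is what gives ``at most $3$ parts.'' You gesture at ``the special structure of the two-point perturbation'' but never land this reduction, and without it the piece count does not follow.

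The two interval claims also have gaps. For $\bigl[\frac{1}{\max(d_x,d_y)+1},1\bigr]$ your argument is circular: the identity $\kappa_p=(1-p)\kappa$ on that interval is, in the paper, \emph{derived from} the linearity you are trying to prove (Theorem \ref{lastpiece}), not available as an input. The paper instead proves (Lemma \ref{phisharp}, using a normalization of the optimal plan that keeps mass in place wherever $\mu_x^p\le\mu_y^p$) that every optimal potential for $p>\frac{1}{d_x+1}$ satisfies $\phi(x)-\phi(y)=1$, and then a shared-optimal-potential lemma (Lemma \ref{ineqs}) upgrades this to linearity on the whole interval. For $\bigl[0,\frac{1}{\mathrm{lcm}(d_x,d_y)+1}\bigr]$ your ``mass balancing'' heuristic does not yield a proof; the actual mechanism is arithmetic. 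The constants $c_j$ are integer linear combinations of $d_x$ and $d_y$, hence divisible by $g=\gcd(d_x,d_y)$; writing the pairwise intersection points of the $f_j$ one finds they all have the form $\frac{K}{\ell+K}$ with $K\in\mathbb{Z}$, $K\ge0$ and $\ell=\mathrm{lcm}(d_x,d_y)$, and the ordering $c_1\ge c_0\ge c_{-1}$ (itself requiring a midpoint/rounding argument, Lemma \ref{halvingworks}) rules out a genuine breakpoint where $f_{-1}=f_1$. Thus the smallest positive breakpoint is at least $\frac{1}{\ell+1}$. None of this is recoverable from your sketch, so the two located breakpoints --- and with them the two-piece conclusion for $d_x=d_y$, whose derivation from the interval claims you do state correctly --- remain unproved.
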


In our above example of $3$ linear parts the changes in slope occurs at $\frac{1}{{\rm{lcm}}(d_{x},d_{y})+1}$ and $\frac{1}{\max(d_{x},d_{y})+1}.$ However this need not always be the case. Consider the following example:
\begin{center}
\begin{tikzpicture}[x=1.5cm, y=1.5cm,
	vertex/.style={
		shape=circle, fill=black, inner sep=1.5pt	
	}
]

\node[vertex, label=below left:$x$] (1) at (0, 1) {};
\node[vertex] (3) at (1, 2) {};
\node[vertex] (4) at (0, 2) {};
\node[vertex, label=below right:$y$] (2) at (1, 1) {};
\node[vertex] (5) at (-1, 1) {};
\node[vertex] (6) at (0, 0) {};
\node[vertex] (7) at (1, 0) {};

\draw (1) -- (2);
\draw (1) -- (5);
\draw (3) -- (4);
\draw (1) -- (4);
\draw (2) -- (3);
\draw (5) -- (4);
\draw (6) -- (7);
\draw (1) -- (6);
\draw (2) -- (7);

\draw [<->] (3,2.2) -- (3,0) -- (6.2,0);
\draw (3,0) -- (3.231, 0.308) -- (3.429, 0.428) -- (6, 0);

\draw[dotted] (3.231, 0.308) -- (3.231, 0);
\draw[dotted] (3.429, 0.428) -- (3.429, 0);

\node at (3.231,-0.2) {$\frac{1}{13}$};
\node at (3.429,-0.2) {$\frac{1}{7}$};
\node at (6,-0.2) {$1$};
\node at (2.85, -0.15) {$0$};
\node at (2.5, 1.3) {$\kappa_{p}$};
\node at (5,-0.4) {$p$};
\end{tikzpicture}
\end{center}
Here the first change in gradient did occur at
$\frac{1}{{\rm{lcm}}(d_{x},d_{y})+1}=\frac{1}{13}$, but the second change in
gradient occurs before $\frac{1}{\max(d_{x},d_{y})+1}=\frac{1}{5}.$

\begin{rem}\label{concavity}
Since $\kappa_{p}(x,y) = 1 - W_{1}(\mu_{x}^{p},\mu_{y}^{p})$ and $ W_{1}(\mu_{x}^{p},\mu_{y}^{p})$ is the supremum of affine functions of $p$ (by the Kantorovich Duality Theorem), then $p\mapsto  W_{1}(\mu_{x}^{p},\mu_{y}^{p})$ is convex and so $p\mapsto \kappa_{p}(x,y)$ is concave. An alternative proof of concavity was given in \cite{LLY11}.
\end{rem}

A consequence of Theorem \ref{main} and the results in \cite{LLY11} is
the following Corollary.

\begin{corollary}\label{cartcor}
Let $G=(V_{G},E_{G})$ be a $d_{G}$-regular graph and $H=(V_{H},E_{H})$ be a $d_{H}$-regular graph. Let $x_{1},x_{2}\in V_{G}$ with $x_{1}\sim x_{2}$ and $y\in V_{H}$. Then
\begin{align*}
& \kappa^{G\times H}_{ p}((x_{1},y),(x_{2},y))
\\
& = \begin{cases} \frac{d_{G}}{d_{G}+d_{H}}\kappa^{G}_{ p}(x_{1},x_{2})+\frac{d_{G}d_{H}}{d_{G}+d_{H}}(\kappa^{G}(x_{1},x_{2})-\kappa^{G}_{0}(x_{1},x_{2})) p,&  \text{if $p \in [0,\frac{1}{d_{G}+d_{H}+1}]$,}\\
\frac{d_{G}}{d_{G}+d_{H}} \kappa^{G}(x_{1},x_{2})(1- p), &  \text{if $p\in [\frac{1}{d_{G}+d_{H}+1},1]$.}
\end{cases}
\end{align*}
\end{corollary}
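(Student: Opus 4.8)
The plan is to exploit that the Cartesian product $G\times H$ of a $d_G$-regular and a $d_H$-regular graph is $(d_G+d_H)$-regular, so both endpoints of the edge $(x_1,y)\sim(x_2,y)$ have degree $d_G+d_H$. Hence Theorem \ref{main} (the regular case) applies and tells us that $f(p):=\kappa^{G\times H}_p((x_1,y),(x_2,y))$ is concave and piecewise linear with at most two linear parts, linear on $[0,\tfrac{1}{d_G+d_H+1}]$ and on $[\tfrac{1}{d_G+d_H+1},1]$. Thus it suffices to identify $f$ on each of these two intervals, and since $f$ is affine on each, it is enough to pin down its values at the breakpoints $p=0$, $p=\tfrac{1}{d_G+d_H+1}$, and $p=1$.

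For the interval $[\tfrac{1}{d_G+d_H+1},1]$ I would invoke the identity $\kappa_p=(1-p)\kappa$, valid on $[\tfrac{1}{\max(d_x,d_y)+1},1]$ and here applied with $\max=d_G+d_H$, together with the Lin--Lu--Yau Cartesian product formula from \cite{LLY11}, which for an edge in the $G$-direction reads $\kappa^{G\times H}((x_1,y),(x_2,y))=\tfrac{d_G}{d_G+d_H}\kappa^G(x_1,x_2)$. This immediately yields the second branch $f(p)=(1-p)\tfrac{d_G}{d_G+d_H}\kappa^G(x_1,x_2)$ and, by continuity, the anchor value $f(\tfrac{1}{d_G+d_H+1})=\tfrac{d_G}{d_G+d_H+1}\kappa^G(x_1,x_2)$.

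The remaining work is the branch $[0,\tfrac{1}{d_G+d_H+1}]$, where by affineness I only need the single value $f(0)=\kappa^{G\times H}_0$. Here the key is a transport computation on the product. I would split each measure $\mu^p_{(x_i,y)}$ into its base-slice part, supported on $V_G\times\{y\}$, and its $H$-fibre part, supported on $\{x_i\}\times N(y)$. A direct check shows that the normalised base-slice part is exactly $\mu^q_{x_i}$ on $G$ for the reparametrisation $q=\tfrac{p(d_G+d_H)}{d_G+pd_H}$, that the base slice carries total mass $m=\tfrac{d_G+pd_H}{d_G+d_H}$, and crucially that $mq=p$ and that $q$ ranges over $[0,\tfrac{1}{d_G+1}]$ as $p$ ranges over $[0,\tfrac{1}{d_G+d_H+1}]$. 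Transporting the base-slice mass optimally inside $V_G\times\{y\}\cong G$ and the fibre mass by the $G$-shift $(x_1,y')\mapsto(x_2,y')$ gives the bound $f(p)\ge m\,\kappa^G_q(x_1,x_2)$; at $p=0$ this reads $\kappa^{G\times H}_0\ge \tfrac{d_G}{d_G+d_H}\kappa^G_0(x_1,x_2)$. Finally, inserting $f(0)$ and $f(\tfrac{1}{d_G+d_H+1})$ into the affine interpolation on $[0,\tfrac{1}{d_G+d_H+1}]$, and using that $\kappa^G_p$ is itself affine on $[0,\tfrac{1}{d_G+1}]\supseteq[0,\tfrac{1}{d_G+d_H+1}]$ together with the identity $mq=p$, reproduces after simplification the stated first branch.

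I expect the genuine obstacle to be the matching lower bound for the Wasserstein distance, i.e. proving that the decomposed plan above is optimal. The naive certificate $\Phi(a,b)=\phi(a)$ built from an optimal Kantorovich potential $\phi$ on $G$ fails, because the base-slice transport and the fibre transport can run in opposite $G$-directions (as already happens on $C_4=K_2\times K_2$ for small $p$). I would instead glue $\phi$ on the base slice to a reflected and shifted copy on the fibres, with the Lipschitz constraints across the $y$--$y'$ edges forcing the precise form; verifying that the resulting $\Phi$ is $1$-Lipschitz on all of $G\times H$ and tight on the relevant edges is the main technical point, after which Kantorovich duality closes the argument.
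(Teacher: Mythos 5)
Your overall skeleton is the same as the paper's: observe that $G\times H$ is $(d_G+d_H)$-regular, so the idleness function of the edge $((x_1,y),(x_2,y))$ has at most two linear parts with the break at $\frac{1}{d_G+d_H+1}$; handle the last piece via $\kappa_p=(1-p)\kappa$ and the Lin--Lu--Yau identity $\kappa^{G\times H}=\frac{d_G}{d_G+d_H}\kappa^G$; and pin down the first piece by its endpoint values. The paper implements this by first proving Theorem \ref{formula} (the explicit two-piece formula in terms of $\kappa_0$ and $\kappa$ valid whenever $d_y\mid d_x$), applying it to the regular product, and then substituting \emph{both} product identities from \cite{LLY11}, namely $\kappa^{G\times H}=\frac{d_G}{d_G+d_H}\kappa^G$ \emph{and} $\kappa_0^{G\times H}=\frac{d_G}{d_G+d_H}\kappa_0^G$; the rest is the same algebra you describe.

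The gap is in your treatment of the first branch. You invoke the $\kappa$-identity from \cite{LLY11} but not the $\kappa_0$-identity, and instead try to rederive $\kappa_0^{G\times H}((x_1,y),(x_2,y))=\frac{d_G}{d_G+d_H}\kappa_0^G(x_1,x_2)$ by an explicit transport plan. Your slice/fibre decomposition and the reparametrisation $q=\frac{p(d_G+d_H)}{d_G+pd_H}$ are computed correctly, but they only yield an upper bound on $W_1$, i.e.\ the one-sided inequality $\kappa_0^{G\times H}\geq \frac{d_G}{d_G+d_H}\kappa_0^G$. You explicitly defer the matching lower bound on $W_1$ (the construction of a $1$-Lipschitz certificate on $G\times H$) as ``the main technical point'' without carrying it out --- but that reverse inequality is exactly the nontrivial half of the Lin--Lu--Yau product theorem, so as written the first branch is not established. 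The repair is immediate: the $\kappa_0$ product formula is part of the theorem from \cite{LLY11} quoted at the start of Section \ref{CartSection}, so cite it alongside the $\kappa$ formula; with both identities in hand, your affine interpolation on $[0,\frac{1}{d_G+d_H+1}]$ and the simplification using the linearity of $\kappa^G_p$ on $[0,\frac{1}{d_G+1}]$ reproduce the stated expression, and your argument then coincides with the paper's.
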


This result shows that the idleness function of the Cartesian product of two regular graphs is completely determined by the idleness functions of the factors.

We finish this introduction with an outline of the rest of this paper. In Section \ref{sect:defn-nots} we present the relevant notation and background material.
In Section \ref{LinSection} we show that $p \mapsto \kappa_{p}$ is piecewise linear with at most $3$ linear parts. In Sections \ref{last} and \ref{first} we give bounds on the size of the first and last linear part.
We prove Corollary \ref{cartcor} in Section \ref{CartSection}. Finally, in Section \ref{QSection}, we present some open questions. Moreover we discuss the problem of characterising edges with globally linear curvature functions.

\section{Definitions and notation}\label{sect:defn-nots}
We now introduce the relevant definitions and notation we will need in this paper. First, we recall the Wasserstein distance and
the Ollivier-Ricci curvature.

\begin{definition}
Let $G = (V,E)$ be a locally finite graph. Let $\mu_{1},\mu_{2}$ be two probability measures on $V$. The {\it Wasserstein distance} $W_1(\mu_{1},\mu_{2})$ between $\mu_{1}$ and $\mu_{2}$ is defined as
\begin{equation} \label{eq:W1def}
W_1(\mu_{1},\mu_{2})=\inf_{\pi \in \Pi(\mu_1,\mu_2)} \sum_{y\in V}\sum_{x\in V} d(x,y)\pi(x,y),
\end{equation}
where 
\[
\Pi(\mu_1,\mu_2) = \left\{ \pi: V \times V \to [0,1] : \mu_{1}(x)=\sum_{y\in V}\pi(x,y), \;
\mu_{2}(y)=\sum_{x\in V}\pi(x,y) 
\right\}.
\]
\end{definition}

The transportation plan $\pi$ moves a mass
distribution given by $\mu_1$ into a mass distribution given by
$\mu_2$, and $W_1(\mu_1,\mu_2)$ is a measure for the minimal effort
which is required for such a transition.
If $\pi$ attains the infimum in \eqref{eq:W1def} we call it an {\it
  optimal transport plan} transporting $\mu_{1}$ to $\mu_{2}$.

\begin{definition}
The $ p-$Ollivier-Ricci curvature of an edge $x\sim y$ in $G=(V,E)$ is
$$\kappa_{ p}(x,y)=1-W_1(\mu^{ p}_x,\mu^{ p}_y),$$
where $p$ is called the {\it idleness}.
\end{definition}

A fundamental concept in optimal transport theory and vital to our work is Kantorovich duality. First we recall the notion
of 1--Lipschitz functions and then state the Kantorovich Duality Theorem.

\begin{definition}
Let $G=(V,E)$ be a locally finite graph, $\phi:V\rightarrow\mathbb{R}.$ We say that $\phi$ is $1$-Lipschitz if
$$|\phi(x) - \phi(y)| \leq d(x,y)$$
for all $x,y\in V.$ Let \textrm{1--Lip} denote the set of all $1$--Lipschitz functions on $V$.
\end{definition}

\begin{theorem}[Kantorovich duality \cite{Vill03}]\label{Kantorovich}
Let $G = (V,E)$ be a locally finite graph. Let $\mu_{1},\mu_{2}$ be two probability measures on $V$. Then
$$W_1(\mu_{1},\mu_{2})=\sup_{\substack{\phi:V\rightarrow \mathbb{R}\\ \phi\in \textrm{\rm{1}--{\rm Lip}}}}  \sum_{x\in V}\phi(x)(\mu_{1}(x)-\mu_{2}(x)).$$
If $\phi \in \textrm{\rm{1}--{\rm Lip}}$ attains the supremum we call it an \emph{optimal Kantorovich potential} transporting $\mu_{1}$ to $\mu_{2}$.
\end{theorem}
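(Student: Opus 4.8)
Since this is the classical Kantorovich--Rubinstein duality, the plan is to recover it from linear programming duality together with the metric structure of the cost $d(x,y)$. I would restrict attention first to the case where $\mu_1,\mu_2$ have finite support, which covers all the measures $\mu_x^p$ arising in this paper; the general case then follows by a standard approximation argument using weak-$*$ compactness of the set of transport plans.

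First I would establish the easy inequality (weak duality). For any $\pi\in\Pi(\mu_1,\mu_2)$ and any $\phi\in$ 1--Lip, a direct manipulation using the two marginal constraints gives
\[
\sum_{x\in V}\phi(x)(\mu_1(x)-\mu_2(x))=\sum_{x,y\in V}(\phi(x)-\phi(y))\pi(x,y)\le\sum_{x,y\in V}d(x,y)\pi(x,y),
\]
where the inequality is the $1$--Lipschitz bound $\phi(x)-\phi(y)\le d(x,y)$. Taking the supremum over $\phi$ on the left and the infimum over $\pi$ on the right yields $\sup_{\phi}\sum_{x}\phi(x)(\mu_1(x)-\mu_2(x))\le W_1(\mu_1,\mu_2)$.

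For the reverse inequality I would view the right-hand side of \eqref{eq:W1def} as the value of a finite linear program in the nonnegative variables $\pi(x,y)$ with equality constraints given by the two marginal conditions. Introducing one real multiplier per constraint, namely functions $\alpha,\beta:V\to\mathbb{R}$, the linear-programming dual is: maximise $\sum_{x}\alpha(x)\mu_1(x)+\sum_{y}\beta(y)\mu_2(y)$ subject to $\alpha(x)+\beta(y)\le d(x,y)$ for all $x,y$. Since the primal is feasible (the product measure lies in $\Pi(\mu_1,\mu_2)$) and its objective is bounded below by $0$, strong LP duality applies and the two optimal values coincide. The final and most delicate step is to reduce this two-potential dual to a single $1$--Lipschitz potential: given any admissible pair, replacing $\beta$ by the $c$-transform $\alpha^c(y):=\inf_{x}(d(x,y)-\alpha(x))$ only increases the objective (as $\mu_2\ge0$) while preserving admissibility, and then replacing $\alpha$ by $\alpha^{cc}:=(\alpha^c)^c\ge\alpha$ increases it again (as $\mu_1\ge0$). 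Here the metric axioms do the work: the triangle inequality forces $\alpha^{cc}$ to be $1$--Lipschitz, while $d(y,y)=0$ gives $(\alpha^{cc})^c=-\alpha^{cc}$. Hence the optimum is attained at a pair of the form $(\phi,-\phi)$ with $\phi:=\alpha^{cc}\in$ 1--Lip, turning the dual objective into $\sum_{x}\phi(x)(\mu_1(x)-\mu_2(x))$. Combining this with strong LP duality gives $W_1(\mu_1,\mu_2)=\sup_{\phi}\sum_{x}\phi(x)(\mu_1(x)-\mu_2(x))$, as claimed.

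I expect the main obstacle to be the strong-duality step. In the finite-support setting it is a clean application of LP duality, with both optima attained; but justifying it for arbitrary probability measures on an infinite vertex set $V$ requires a genuine compactness/approximation argument (or an appeal to a minimax theorem), and one must check throughout that the $c$-transform reduction exploits the fact that the cost is the \emph{metric} $d$ rather than an arbitrary nonnegative cost, since it is precisely the triangle inequality and $d(y,y)=0$ that collapse the pair $(\alpha,\beta)$ to $(\phi,-\phi)$ with $\phi$ $1$--Lipschitz.
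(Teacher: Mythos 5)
The paper does not prove Theorem \ref{Kantorovich} at all: it is imported as a classical result with a citation to \cite{Vill03}, so there is no internal proof to compare against. Your sketch is nevertheless a correct reconstruction of the standard argument, and it meshes well with how the theorem is actually used in the paper. Your weak-duality computation is literally the computation the paper carries out inside its proof of Lemma \ref{mass-sharpness}; your strong-duality step (finite LP duality for the program \eqref{eq:W1def}, followed by the $c$-transform collapse of an admissible pair $(\alpha,\beta)$ to $(\phi,-\phi)$ with $\phi=\alpha^{cc}$ being 1--Lipschitz, where the triangle inequality gives the Lipschitz bound and $d(y,y)=0$ gives $(\alpha^{cc})^c=-\alpha^{cc}$) is the textbook route and is sound. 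Two refinements are worth making explicit. First, your reduction to finitely supported measures is not merely convenient but fully sufficient for this paper: $G$ is locally finite, so every $\mu_x^p$ has finite support, and any $\pi\in\Pi(\mu_1,\mu_2)$ is automatically concentrated on $\operatorname{supp}\mu_1\times\operatorname{supp}\mu_2$ (nonnegativity of $\pi$ plus vanishing marginals force $\pi(x,\cdot)=0$ off the support). Second, your LP lives on the finite support $S$, while the supremum in the theorem ranges over 1--Lipschitz functions on all of $V$; to close the loop you need both that restriction to $S$ preserves the Lipschitz constraint (giving one inequality) and that an optimal dual potential on $S$ extends to all of $V$, e.g.\ by the McShane-type formula $\phi(v)=\min_{x\in S}\left(\phi(x)+d(x,v)\right)$, which is 1--Lipschitz on $V$ and agrees with $\phi$ on $S$ (giving the other inequality and, importantly, \emph{attainment} of the supremum --- the paper repeatedly assumes optimal Kantorovich potentials exist, as in Lemmas \ref{mass-sharpness} and \ref{phisharp}). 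Your closing caveat that arbitrary measures require a genuine compactness or minimax argument is fair, but it is immaterial here, since the paper never applies the theorem beyond finitely supported measures.
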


\section{Properties of the idleness function}\label{LinSection}
In this section we prove that the Ollivier-Ricci idleness function has at most $3$ linear parts. Two ingredients of this proof are the `integer-valuedness' of optimal Kantorovich potentials and the Complementary Slackness Theorem, which we state and prove now.
\begin{lemma}\label{mass-sharpness}
Let $G= (V,E)$ be a locally finite graph. Let $x,y\in V$ with $x\sim y.$ Let $p\in [0,1].$ Let $\pi$ and $\phi$ be an optimal transport plan and an optimal Kantorovich potential transporting $\mu^{p}_{x}$ to $\mu^{p}_{y},$ respectively. Let $u,v\in V$ with $\pi(u,v)\neq 0.$ Then
$$\phi(u) - \phi(v) = d(u,v).$$
\end{lemma}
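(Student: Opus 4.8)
The plan is to combine the two optimality hypotheses through the strong duality supplied by Theorem~\ref{Kantorovich}, and then to exploit the fact that the resulting slackness terms are individually nonnegative. Write $\mu_1 = \mu^{p}_{x}$ and $\mu_2 = \mu^{p}_{y}$. Since $\pi$ attains the infimum in the definition of $W_1(\mu_1,\mu_2)$ and $\phi$ attains the supremum in the Kantorovich dual, the primal and dual optimal values coincide, giving
\[
\sum_{u\in V}\sum_{v\in V} d(u,v)\,\pi(u,v) = \sum_{u\in V}\phi(u)\bigl(\mu_1(u)-\mu_2(u)\bigr).
\]
All sums here are genuinely finite: $\mu_1$ and $\mu_2$ are finitely supported, and $\pi(u,v)>0$ forces $\mu_1(u)>0$ and $\mu_2(v)>0$ through the marginal constraints, so $\pi$ is supported on a finite set and no convergence subtleties arise.

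Next I would rewrite the right-hand side purely in terms of $\pi$, using the marginal constraints $\mu_1(u)=\sum_{v}\pi(u,v)$ and $\mu_2(v)=\sum_{u}\pi(u,v)$. Substituting these and relabelling the summation index in the $\mu_2$-term yields
\[
\sum_{u\in V}\phi(u)\bigl(\mu_1(u)-\mu_2(u)\bigr)=\sum_{u\in V}\sum_{v\in V}\bigl(\phi(u)-\phi(v)\bigr)\pi(u,v).
\]
Equating this expression with the primal value above and collecting the two double sums into one gives the single identity
\[
\sum_{u\in V}\sum_{v\in V}\Bigl(d(u,v)-\bigl(\phi(u)-\phi(v)\bigr)\Bigr)\pi(u,v)=0.
\]

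The final step is to note that every summand is nonnegative: $\pi(u,v)\geq 0$ by definition of a transport plan, and $\phi(u)-\phi(v)\leq|\phi(u)-\phi(v)|\leq d(u,v)$ because $\phi$ is $1$-Lipschitz, so the bracketed factor is nonnegative as well. A sum of nonnegative terms that equals zero must vanish term by term, hence $\bigl(d(u,v)-(\phi(u)-\phi(v))\bigr)\pi(u,v)=0$ for every pair $(u,v)$. For the given pair with $\pi(u,v)\neq 0$ this forces $d(u,v)-(\phi(u)-\phi(v))=0$, that is, $\phi(u)-\phi(v)=d(u,v)$, which is the claim.

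I do not expect a serious obstacle: this is the standard complementary slackness argument of linear-programming duality transcribed to the transport setting. The only point requiring care is the passage to the exact equality of primal and dual optimal values (strong duality), but that is precisely the content of the Kantorovich Duality Theorem already at our disposal, and the finiteness of the supports of $\mu_1$ and $\mu_2$ makes every rearrangement of the sums legitimate.
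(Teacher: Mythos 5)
Your proposal is correct and follows essentially the same complementary slackness argument as the paper: equate the primal and dual optimal values, rewrite the dual value as $\sum_{u,v}(\phi(u)-\phi(v))\pi(u,v)$ via the marginal constraints, and conclude from the resulting sum of nonnegative terms being zero. The only cosmetic difference is that you collect everything into a single vanishing sum while the paper runs a chain of inequalities that must all be equalities; your remark on finiteness of the supports is a welcome extra but does not change the route.
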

This follows from the Complementary Slackness Theorem (see, for example, \cite[page 49]{Min}) or from standard results in optimal transport theory \cite[page 88]{Vill03}. For the sake of completeness we include a short proof here.
\begin{proof}
By the definitions of $\phi$ and $\pi$ we have
$$W_{1}(\mu^{p}_{x}, \mu^{p}_{y})  = \sum_{w\in V} \phi(w)(\mu^{p}_{x}-\mu^{p}_{y})(w) = \sum_{w\in V} \sum_{z\in V} d(w,z) \pi(w,z),$$
and
$$\sum_{w\in V}\pi(w,z) = \mu^{p}_{y}(z),\:\:\:\:\: \sum_{z\in V}\pi(w,z)  = \mu^{p}_{x}(w).$$
Then
\begin{align*}
W_{1}(\mu^{p}_{x}, \mu^{p}_{y}) &  = \sum_{w\in V} \phi(w)\mu^{p}_{x}(w)-\sum_{z\in V} \phi(z)\mu^{p}_{y}(z)
\\
& = \sum_{w\in V} \phi(w)\sum_{z\in V}\pi(w,z)-\sum_{z\in V} \phi(z)\sum_{w\in V}\pi(w,z)
\\
& = \sum_{w\in V} \sum_{z\in V} (\phi(w)-\phi(z)) \pi(w,z)
\\
& \leq \sum_{w\in V} \sum_{z\in V} d(w,z) \pi(w,z)
\\
& = W_{1}(\mu^{p}_{x}, \mu^{p}_{y}).
\end{align*}
Thus
$$\sum_{w\in V} \sum_{z\in V} (\phi(w)-\phi(z)) \pi(w,z) = \sum_{w\in V} \sum_{z\in V} d(w,z) \pi(w,z).$$
Therefore
$$\phi(w)-\phi(z)<d(w,z)\implies \pi(w,z) = 0,$$
thus completing the proof.
\end{proof}

As mentioned in the introduction, in \cite{BM15} the authors discuss the existence of integer-valued optimal Kantorovich potentials in the special case of vanishing idleness ($p = 0$). We first introduce the floor and ceiling of functions and then state a corresponding result for the case of arbitrary idleness.

\begin{definition}
Let $G=(V,E)$ be a locally finite graph and let $\phi :V \rightarrow \mathbb{R}.$ Define the functions $\floor{\phi}$ and $\ceil{\phi}$ as follows:
\begin{align*}
\floor{\phi}: V & \rightarrow \mathbb{R}
\\
v & \mapsto \floor{\phi(v)},
\\
\ceil{\phi}: V & \rightarrow \mathbb{R}
\\
v & \mapsto \ceil{\phi(v)}.
\end{align*}
\end{definition}

\begin{lemma}\label{fandc}
Let $G=(V,E)$ be a locally finite graph. Let $\phi\in \textrm{\rm{1}--{\rm Lip}}.$ Then $\floor{\phi}, \ceil{\phi}\in \textrm{\rm{1}--{\rm Lip}}.$
\end{lemma}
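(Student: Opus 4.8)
The plan is to show that taking the floor (or ceiling) of a 1-Lipschitz function preserves the Lipschitz property, which amounts to verifying $|\floor{\phi}(x) - \floor{\phi}(y)| \leq d(x,y)$ for all $x,y \in V$. Since the distance $d$ is integer-valued on a graph, the key structural fact is that we are comparing two integers $\floor{\phi(x)}$ and $\floor{\phi(y)}$, so their difference is automatically an integer; the task is to bound that integer by $d(x,y)$.

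First I would reduce to the case of adjacent vertices, or more precisely argue directly along a shortest path. By the triangle inequality for $d$ and the triangle inequality for $|\floor{\phi}(\cdot) - \floor{\phi}(\cdot)|$, it suffices to prove the inequality for a single step, i.e. to show that if $d(x,y)=1$ then $|\floor{\phi(x)} - \floor{\phi(y)}| \leq 1$; summing along a geodesic of length $d(x,y)$ then yields the general bound. So the core is the edge case.

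For the edge case, suppose $x \sim y$, so $|\phi(x) - \phi(y)| \leq 1$. I want to show $\floor{\phi(x)}$ and $\floor{\phi(y)}$ differ by at most $1$. The clean way is to observe $\floor{\phi(x)} \leq \phi(x) \leq \phi(y) + 1 < \floor{\phi(y)} + 2$, and since both sides are integers, $\floor{\phi(x)} \leq \floor{\phi(y)} + 1$; by symmetry $\floor{\phi(y)} \leq \floor{\phi(x)} + 1$, giving $|\floor{\phi(x)} - \floor{\phi(y)}| \leq 1$. The argument for $\ceil{\phi}$ is entirely analogous (or follows from $\ceil{\phi} = -\floor{-\phi}$ together with the observation that $-\phi$ is 1-Lipschitz whenever $\phi$ is).

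I do not expect any serious obstacle here; the only point requiring a little care is the interaction between the real-valued inequality $|\phi(x)-\phi(y)| \leq 1$ and the integrality of $\floor{\phi}$, which is exactly where the strict-versus-weak inequality bookkeeping with the floor function must be handled correctly. The lemma is genuinely elementary and its role is purely as a tool: it lets us later assume, without loss of generality, that an optimal Kantorovich potential is integer-valued, which is what drives the piecewise-linear structure in the main theorem.
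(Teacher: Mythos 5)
Your proof is correct and rests on the same key idea as the paper's: the difference $\floor{\phi(x)}-\floor{\phi(y)}$ is an integer, so a strict bound of the form $<d(x,y)+1$ upgrades to $\leq d(x,y)$. The only organizational difference is that you first reduce to adjacent vertices and sum along a geodesic, whereas the paper writes $\floor{\phi(v)}=\phi(v)-\delta_v$ with $\delta_v\in[0,1)$ and obtains $|\floor{\phi(v)}-\floor{\phi(w)}|\leq d(v,w)+|\delta_v-\delta_w|<d(v,w)+1$ directly for an arbitrary pair, making your geodesic reduction unnecessary (though harmless).
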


\begin{proof}
For each $v\in V$ set $\delta_{v} = \phi(v) - \floor{\phi(v)}.$ Note that $\delta_{v}\in[0,1).$ Then
$$|\floor{\phi(v)}-\floor{\phi(w)}|  = |\phi(v)-\delta_{v} -\phi(w) +\delta_{w}|\leq d(v,w) + |\delta_{v}- \delta_{w}|.$$
Since $\delta_{v}- \delta_{w}\in (-1,1)$ we have $|\floor{\phi(v)}-\floor{\phi(w)}| < d(v,w) +1 $ and so $|\floor{\phi(v)}-\floor{\phi(w)}| \leq d(v,w)$ since $|\floor{\phi(v)}-\floor{\phi(w)}|$ is integer valued. Thus $\floor{\phi}\in \textrm{1--Lip}.$ The proof that $\ceil{\phi}\in \textrm{1--Lip}$ follows similarly.
\end{proof}

\begin{lemma}[Integer-Valuedness]\label{integerness}
Let $G=(V,E)$ be a locally finite graph. Let $x,y\in V$ with $x\sim y.$ Let $ p\in [0,1].$ Then there exists $\phi\in \textrm{\rm{1}--{\rm Lip}}$ such that
$$W_1(\mu^{ p}_{x},\mu^{ p}_{y})= \sum_{w\in V}\phi(w)(\mu^{ p}_{x}(w)-\mu^{ p}_{y}(w)),$$
and $\phi(w)\in\mathbb{Z}$ for all $w\in V$.
\end{lemma}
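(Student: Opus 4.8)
The plan is to take an arbitrary optimal Kantorovich potential $\phi$ (which exists by the Kantorovich Duality Theorem, Theorem~\ref{Kantorovich}, since the supremum is over a compact-after-normalization set and the support of $\mu_x^p - \mu_y^p$ is finite) and replace it by a suitable integer-rounding of it, using Lemma~\ref{fandc} to guarantee that the rounded function is still $1$-Lipschitz. The key observation is that the dual objective $\sum_w \phi(w)(\mu_x^p(w) - \mu_y^p(w))$ is an affine (in fact linear) functional of the values $\phi(w)$, and we only ever modify $\phi$ at a fixed finite set of vertices.

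First I would normalize: since $\sum_w (\mu_x^p(w) - \mu_y^p(w)) = 0$, the dual objective is unchanged if we add a constant to $\phi$, so we may assume $\phi(x) \in \mathbb{Z}$ (say $\phi(x) = 0$). Next I would consider the two candidates $\floor{\phi}$ and $\ceil{\phi}$, both of which lie in $\textrm{1--Lip}$ by Lemma~\ref{fandc}. Writing $\delta_w = \phi(w) - \floor{\phi(w)} \in [0,1)$, one can express the objective value of $\phi$ as a convex combination involving the objective values of $\floor{\phi}$ and $\ceil{\phi}$ at each vertex. The crux is then a standard averaging/linearity argument: because the objective is linear in the vertex-values, its value at $\phi$ is a weighted average of its values at integer-valued potentials, so at least one integer-valued competitor must achieve an objective value at least as large as that of $\phi$. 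Combined with optimality of $\phi$ and the fact that $\floor{\phi}, \ceil{\phi}$ are themselves admissible (hence cannot exceed the supremum), that competitor must attain the supremum exactly, giving the desired integer-valued optimal potential.

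A clean way to run the averaging is to note that for a $0/1$-valued choice at each vertex we can consider rounding $\phi$ by thresholding: fix a uniform random threshold $t \in [0,1)$ and set $\phi_t(w) = \ceil{\phi(w) - t}$, so that $\phi_t$ is integer-valued, equals one of $\floor{\phi(w)}, \ceil{\phi(w)}$ at each $w$, and satisfies $\mathbb{E}_t[\phi_t(w)] = \phi(w)$ up to a fixed integer shift. I would still need $\phi_t \in \textrm{1--Lip}$, which follows since $\phi_t$ differs from a floor/ceiling of the shifted function $\phi - t$ by lattice operations covered by Lemma~\ref{fandc}. Taking expectations of the (linear) objective then gives $\mathbb{E}_t[\text{obj}(\phi_t)] = \text{obj}(\phi) = W_1(\mu_x^p, \mu_y^p)$, so some threshold $t_0$ yields $\text{obj}(\phi_{t_0}) \geq W_1$, and since $\phi_{t_0}$ is admissible we get equality.

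\textbf{The main obstacle} I anticipate is ensuring that the integer-valued candidate remains genuinely $1$-Lipschitz on \emph{all} of $V$, not just on the finite support of the measures — one must be careful that rounding does not break the Lipschitz bound across an edge joining two vertices where the fractional parts differ maximally. This is precisely what Lemma~\ref{fandc} is designed to handle, so the real content is to phrase the rounding so that it reduces to applying that lemma (to $\phi$ itself, or to a constant shift of it), rather than proving Lipschitzness afresh. A secondary point requiring care is the existence of a maximizing $\phi$ at all: since $V$ may be infinite, I would restrict attention to the behaviour of $\phi$ on the finite set $B = \{x, y\} \cup \{z : z \sim x \text{ or } z \sim y\}$ on which the objective depends, extend optimally elsewhere, and invoke compactness on the relevant finite-dimensional slice of $\textrm{1--Lip}$ values to secure an optimal potential before rounding.
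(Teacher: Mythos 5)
Your argument is correct, but it takes a genuinely different route from the paper. The paper's proof works on the primal side: it takes an optimal potential $\Phi$ together with an optimal transport plan $\pi$, invokes complementary slackness (Lemma \ref{mass-sharpness}) to show that $\Phi$ has constant fractional part on each connected component of the ``transport graph'', observes that no mass crosses components, and concludes that $\floor{\Phi}$ already attains the optimum. Your proof stays entirely on the dual side: after normalizing $\phi(x)\in\mathbb{Z}$, the threshold roundings $\phi_t=\ceil{\phi-t}$ are integer-valued, are $1$--Lipschitz by Lemma \ref{fandc} applied to the shifted function, satisfy $\mathbb{E}_t[\phi_t(w)]=\phi(w)$ exactly, and hence by linearity of the objective and the upper bound $\mathrm{obj}(\phi_t)\le W_1$ some $t_0$ must achieve equality. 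This is a clean randomized-rounding argument that needs neither the transport plan nor Lemma \ref{mass-sharpness}; what it gives up is the extra structural information in the paper's proof (that $\floor{\Phi}$ itself is optimal and why), which is in the spirit of how the paper reuses floor/ceiling arguments later (e.g.\ Lemma \ref{halvingworks}). One caution: your intermediate remark that the objective at $\phi$ is a convex combination of its values at $\floor{\phi}$ and $\ceil{\phi}$ is not literally true when the fractional parts $\delta_w$ differ across vertices (and an arbitrary mixed floor/ceiling choice need not be $1$--Lipschitz); your threshold construction is precisely the correct repair, so make sure the final write-up relies only on that version. Your handling of existence of an optimal potential via the finite set $B$ and a Lipschitz extension is fine and is a point the paper passes over silently.
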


\begin{proof}
Let $\Phi$ be an optimal Kantorovich potential transporting $\mu^{p}_{x}$ to $\mu^{p}_{y}.$ Let $\pi$ be an optimal transport plan transporting $\mu^{p}_{x}$ to $\mu^{p}_{y}.$ Consider the following graph $H$ with vertices $V$ and edges given by the following adjacency matrix $A$:
\begin{gather*}
A(v,w)=1 \:\:\:\textrm{if}\:\:\: \pi(v,w)=1\:\:\: \textrm{or}\:\:\: \pi(w,v)=1,
\\
A(v,w)=0 \:\:\:\textrm{otherwise}.
\end{gather*}
Let $(W_{i})_{i=1}^{n}$ denote the connected components of $H$. Fix $u,v\in W_{i}$ for some $i\in\{1,\ldots, n\}.$ By Lemma \ref{mass-sharpness} we have $|\Phi(u)-\Phi(v)| = d(u,v).$

Define $\phi:V\rightarrow \mathbb{R}$ as follows
$$\phi(v) = \sup \{ \psi(v) : \psi:V \to \mathbb{Z},\:\: \psi\in \textrm{1--Lip}, \, \psi \le \Phi \}.$$
By definition, $\phi$ is an integer-valued 1--Lipschitz function and $\phi\leq\Phi.$ Note that $\phi=\floor{\Phi}$ since $\floor{\Phi}\in \textrm{1--Lip}$ by Lemma \ref{fandc}.

Finally we must show that $\phi$ is optimal.
For each $v\in V$ set $\delta_{v} = \Phi(v) - \floor{\Phi(v)} = \Phi(v) - \phi(v)$. Note that $\mu_{x}^{p}(W_{i}) = \mu_{y}^{p}(W_{i})$ for all $i$ (since no mass is transported between different connected components $W_{i}$), and that $\delta_{u} =\delta_{v}$ if $u,v$ belong to the same component $W_{i}$, for some $i$. Set $\delta_{i}=\delta_{u}$ for any $u\in W_{i}.$ Then
\begin{align*}
\sum_{w\in V}\phi(w)(\mu^{ p}_{x}(w)-\mu^{ p}_{y}(w)) = & \sum_{w\in V}(\Phi(w)-\delta_{w})(\mu^{ p}_{x}(w)-\mu^{ p}_{y}(w))
\\
 = & \sum_{i=1}^{n}\sum_{w\in W_{i}}(\Phi(w)-\delta_{w})(\mu^{ p}_{x}(w)-\mu^{ p}_{y}(w))
\\
 = & \sum_{i=1}^{n}\sum_{w\in W_{i}}\Phi(w)(\mu^{ p}_{x}(w)-\mu^{ p}_{y}(w))
\\
& -\sum_{i=1}^{n}\sum_{w\in W_{i}}\delta_{w}(\mu^{ p}_{x}(w)-\mu^{ p}_{y}(w))
\\
= & \sum_{w\in V}\Phi(w)(\mu^{ p}_{x}(w)-\mu^{ p}_{y}(w)) - \sum_{i=1}^{n}\delta_{i}\underbrace{(\mu^{ p}_{x}(W_{i})-\mu^{ p}_{y}(W_{i})}_{=0})
\\
= & W_{1}(\mu_{x}^{p},\mu_{y}^{p}).
\end{align*}
Therefore $\phi$ is optimal, as required.
\end{proof}

Now we formulate our main result of this section.

\begin{theorem}\label{linear}
Let $G=(V,E)$ be a locally finite graph. Let $x,y\in V$ with $x\sim y.$ Then $p \mapsto \kappa_{p}(x,y)$ is piecewise linear over $[0,1]$ with at most $3$ linear parts.
\end{theorem}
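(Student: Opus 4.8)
<br>

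The plan is to show that $p \mapsto W_1(\mu_x^p, \mu_y^p)$ is piecewise linear with at most three pieces, from which the corresponding statement for $\kappa_p(x,y) = 1 - W_1(\mu_x^p,\mu_y^p)$ follows immediately. By the Kantorovich Duality Theorem, $W_1(\mu_x^p,\mu_y^p)$ is the supremum over $1$-Lipschitz $\phi$ of the linear-in-$p$ quantity $\sum_w \phi(w)(\mu_x^p(w) - \mu_y^p(w))$; explicitly, writing out the measures, for a fixed $\phi$ this expression is an affine function of $p$. A supremum of affine functions is convex and piecewise linear, but a priori the number of linear pieces is unbounded, so the whole difficulty is to bound the number of distinct \emph{slopes} that can actually be realized as $p$ ranges over $[0,1]$.

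The key reduction is to use Lemma \ref{integerness}: for each $p$ there is an \emph{integer-valued} optimal Kantorovich potential. First I would expand the objective for an integer-valued $\phi$. Using $\mu_x^p(x) = p$, $\mu_x^p(z) = (1-p)/d_x$ for $z \sim x$, and similarly for $y$, one finds
\begin{equation*}
\sum_{w} \phi(w)(\mu_x^p(w) - \mu_y^p(w)) = (1-p)\left( \frac{1}{d_x}\sum_{z\sim x}\phi(z) - \frac{1}{d_y}\sum_{z\sim y}\phi(z) \right) + p\,(\phi(x) - \phi(y)),
\end{equation*}
which is affine in $p$ with slope determined by the two bracketed averages together with $\phi(x)-\phi(y)$. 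Crucially, an optimal potential may always be normalized so that $\phi(x) - \phi(y) = d(x,y) = 1$ (shifting $\phi$ by a constant does not change the objective, and $x \sim y$ forces $|\phi(x)-\phi(y)|\le 1$ with equality attainable). Thus the slope of each affine piece is governed entirely by $\frac{1}{d_x}\sum_{z\sim x}\phi(z) - \frac{1}{d_y}\sum_{z\sim y}\phi(z)$ with $\phi$ integer-valued, and each such average is a rational number whose denominator divides $d_x$ (respectively $d_y$); hence every realizable slope lies in the discrete set $\frac{1}{d_x}\mathbb{Z} + \frac{1}{d_y}\mathbb{Z} = \frac{1}{\mathrm{lcm}(d_x,d_y)}\mathbb{Z}$.

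With the slopes confined to a discrete set, convexity does the rest: since $W_1(\mu_x^p,\mu_y^p)$ is convex and piecewise linear, its slope is a nondecreasing step function of $p$, so the number of linear pieces equals one plus the number of distinct slopes attained. To pin this down to \emph{three} pieces I would argue that the slopes are not merely discrete but uniformly bounded in magnitude by a constant depending only on $d_x, d_y$, and then sharpen this by analysing the admissible range of the integer averages. I expect the main obstacle to be precisely this final counting step — turning ``finitely many possible slopes'' into the exact bound of three, rather than merely some bound. The cleanest route, which the surrounding sections (``bounds on the size of the first and last linear part'') suggest the authors pursue, is to combine convexity with the explicit interval endpoints from Theorem \ref{main}: one shows the slope is constant (equal to its value at $p=0$) on $\left[0, \tfrac{1}{\mathrm{lcm}(d_x,d_y)+1}\right]$ and constant again on $\left[\tfrac{1}{\max(d_x,d_y)+1}, 1\right]$, so that all slope changes are squeezed into the middle interval; convexity then forces at most one additional piece there, yielding at most three pieces in total. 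The delicate part is verifying linearity on these two extreme intervals, which requires constructing explicit optimal transport plans (or potentials) valid throughout each interval rather than at a single $p$.
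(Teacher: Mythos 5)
Your overall framework (Kantorovich duality, integer-valued potentials, convexity) matches the paper's, but two steps are genuinely broken. First, the normalization $\phi(x)-\phi(y)=1$ is not available: adding a constant to $\phi$ leaves the objective unchanged but also leaves the \emph{difference} $\phi(x)-\phi(y)$ unchanged, so it cannot be used to force that difference to equal $1$. Indeed it is false that every $p$ admits an optimal potential with $\phi(x)-\phi(y)=1$: for the $4$-cycle at $p=0$ every optimal potential satisfies $\phi(x)-\phi(y)=-1$, and the paper only establishes $\phi(x)-\phi(y)=1$ for $p>\frac{1}{1+d_x}$ (Lemma \ref{phisharp}). The fact that $\phi(x)-\phi(y)$ can take any of the three values $-1,0,1$ depending on $p$ is precisely the source of the three linear pieces, so assuming it away discards the heart of the problem. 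Second, your closing step --- linearity on $\left[0,\frac{1}{\mathrm{lcm}(d_x,d_y)+1}\right]$ and on $\left[\frac{1}{\max(d_x,d_y)+1},1\right]$ plus convexity ``forces at most one additional piece'' in between --- does not follow: a convex piecewise linear function that is linear on two end intervals can still have many pieces in the middle interval, and discreteness of the slopes in $\frac{1}{\mathrm{lcm}(d_x,d_y)}\mathbb{Z}$ together with their boundedness only caps the number of pieces by something of order $\mathrm{lcm}(d_x,d_y)$, not by $3$. There is also a circularity worry: the paper's proof of linearity on the first interval itself uses the three-function decomposition established in this very theorem.

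The missing idea is to normalize $\phi(y)=0$ (which \emph{is} legitimate, by adding a constant) and then partition the integer-valued $1$-Lipschitz potentials according to $j=\phi(x)\in\{-1,0,1\}$. For fixed $j$ the dual objective reads $j\left(p-\frac{1-p}{d_y}\right)+\frac{1-p}{d_xd_y}F(\phi)$ with $F(\phi)=d_y\sum_{z\sim x,\,z\ne y}\phi(z)-d_x\sum_{z\sim y,\,z\ne x}\phi(z)$; since the coefficient $\frac{1-p}{d_xd_y}$ is nonnegative for all $p\in[0,1]$, the supremum over this class is obtained by maximizing $F$ \emph{independently of} $p$, so each class contributes a single affine function $f_j(p)$ rather than an infinite family. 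Hence $W_1(\mu_x^p,\mu_y^p)=\max\{f_{-1}(p),f_0(p),f_1(p)\}$ is a maximum of exactly three affine functions, which immediately yields at most three linear parts.
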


\begin{proof}
For $\phi:V\rightarrow \mathbb{R},$ let
\begin{equation}
\label{eq:F}
F(\phi) = d_{y}\left(\sum_{\substack{z\sim x\\ z\neq y}}\phi(z)\right)-d_{x}\left(\sum_{\substack{z\sim y\\ z\neq x}}\phi(z)\right).
\end{equation}
For $j\in\{-1,0,1\}$, define 
\begin{equation} \label{eq:optprobsep2}
\mathcal{A}_{j} = \{\phi:V\rightarrow\mathbb{Z}: \phi(x)=j, \, \phi(y)=0, \, \phi \in \textrm{1--Lip}\}
\end{equation}
and define the constants
$$c_{j} = \sup_{\phi\in\mathcal{A}_{j}} F(\phi).$$
Finally we define the linear maps
$$f_{j}( p) =  \left(p  -\frac{1- p}{d_y}\right)j+\frac{1- p}{d_{x}d_{y}}c_{j}.$$
Then
\begin{align}
\nonumber
W_1(\mu^{ p}_{x},\mu^{ p}_{y}) & = \sup_{\phi\in \textrm{1--Lip}}\sum_{w\in V}\phi(w)(\mu^{ p}_{x}(w)-\mu^{ p}_{y}(w))
\\
\nonumber
& = \sup_{\substack{\phi\in \textrm{1--Lip}\\ \phi:V\rightarrow \mathbb{Z}\\ \phi(y)=0}} \sum_{w\in V}\phi(w)(\mu^{ p}_{x}(w)-\mu^{ p}_{y}(w))
\\
\nonumber
& = \sup_{\substack{\phi\in \textrm{1--Lip}\\ \phi:V\rightarrow \mathbb{Z}\\ \phi(y)=0}}\left\{ \phi(x)\left(p-\frac{1-p}{d_{y}}\right)+\frac{1-p}{d_{x}}\sum_{\substack{w\sim x\\ w\neq y}}\phi(w) -\frac{1-p}{d_{y}}\sum_{\substack{w\sim y\\ w\neq x}}\phi(w)\right\}
\\
\nonumber
& = \sup_{\substack{\phi\in \textrm{1--Lip}\\ \phi:V\rightarrow \mathbb{Z}\\ \phi(y)=0}}\left\{ \phi(x)\left(p-\frac{1-p}{d_{y}}\right)+\frac{1-p}{d_{x}d_{y}}\left(d_{y}\sum_{\substack{w\sim x\\ w\neq y}}\phi(w)-d_{x}\sum_{\substack{w\sim y\\ w\neq x}}\phi(w)\right)\right\}
\\
\nonumber
& = \sup_{\substack{\phi\in \textrm{1--Lip}\\ \phi:V\rightarrow \mathbb{Z}\\ \phi(y)=0}}\left\{ \phi(x)\left(p-\frac{1-p}{d_{y}}\right)+\frac{1-p}{d_{x}d_{y}}F(\phi)\right\}
\\
\nonumber
& = \max_{j \in \{-1,0,1\}}\sup_{\phi \in \mathcal{A}_{j}}\left\{ j\left(p-\frac{1-p}{d_{y}}\right)+\frac{1-p}{d_{x}d_{y}}F(\phi)\right\}
\\
\nonumber
& = \max_{j \in \{-1,0,1\}}\left\{ j\left(p-\frac{1-p}{d_{y}}\right)+\frac{1-p}{d_{x}d_{y}}\sup_{\phi \in \mathcal{A}_{j}}F(\phi)\right\}
\\
\nonumber
& = \max_{j \in \{-1,0,1\}}\left\{ j\left(p-\frac{1-p}{d_{y}}\right)+\frac{1-p}{d_{x}d_{y}}c_j\right\}
\\
\label{eq:max}
& = \max\{f_{-1}( p), f_{0}( p), f_{1}( p)\}.
\end{align}
Therefore
$$\kappa_{p}(x,y) = 1 - \max\{f_{-1}(p), f_{0}(p), f_{1}(p)\}.$$
Since $\max\{f_{-1}( p), f_{0}( p), f_{1}( p)\}$ is the maximum of three linear functions of $p$, it is convex and piecewise linear in $p$ with at most $3$ linear parts, thus completing the proof.
\end{proof}

\section{Length of the last linear part}\label{last}
Before discussing the size of the last linear part we first need the
following lemma about some of the assumptions we can impose on an optimal
transport plan. We then show that, if different idlenesses $p_1 < p_2$
share a joint optimal Kantorovich potential, then the Ollivier-Rici
idleness function is linear on the whole interval $[p_1,p_2]$. This
was already mentioned in \cite{Smith14} for the special case $p_2=1$.

\begin{lemma}\label{goodplanlemma}
Let $\mu_{1}$ and $\mu_{2}$ be probability measures on $V.$ Then there exists an optimal transport plan $\pi$ transporting $\mu_{1}$ to $\mu_{2}$ with the following property:
For all $x\in V$ with $\mu_{1}(x)\leq \mu_{2}(x)$ we have $\pi(x,x) = \mu_{1}(x).$
\end{lemma}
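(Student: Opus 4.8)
The plan is to begin with an arbitrary optimal transport plan and repeatedly perform a local \emph{rerouting} that raises the mass $\pi(x,x)$ kept fixed at each vertex $x$ with $\mu_1(x)\le\mu_2(x)$, while never increasing the cost. The elementary move is this. Suppose $\pi$ is optimal and some vertex $x$ satisfies $\mu_1(x)\le\mu_2(x)$ but $\pi(x,x)=a<\mu_1(x)$. Since $\sum_{z}\pi(x,z)=\mu_1(x)>a$, mass leaves $x$, so there is $v\ne x$ with $\pi(x,v)>0$; and since $\sum_{w}\pi(w,x)=\mu_2(x)\ge\mu_1(x)>a$, mass enters $x$, so there is $u\ne x$ with $\pi(u,x)>0$. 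Set $\eps=\min\{\pi(x,v),\pi(u,x)\}>0$ and reroute: increase $\pi(x,x)$ and $\pi(u,v)$ by $\eps$, decrease $\pi(x,v)$ and $\pi(u,x)$ by $\eps$, and leave every other entry unchanged.

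Next I would verify that this move preserves feasibility and optimality. Both marginals are unchanged at every vertex, since the $+\eps$ and $-\eps$ adjustments cancel in each row and in each column; hence the new plan lies in $\Pi(\mu_1,\mu_2)$. The change in cost is $\eps\big(d(x,x)+d(u,v)-d(x,v)-d(u,x)\big)=\eps\big(d(u,v)-d(u,x)-d(x,v)\big)\le 0$, using $d(x,x)=0$ and the triangle inequality. As $\pi$ was optimal the cost cannot strictly decrease, so it is in fact unchanged and the new plan is again optimal; the net effect is to raise $\pi(x,x)$ by $\eps>0$.

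To apply this move at all relevant vertices at once without undoing earlier gains, I would select, among all optimal transport plans, one that maximizes the total diagonal mass $T(\pi):=\sum_{z\in V}\pi(z,z)$; such a maximizer exists because the set of optimal plans is compact and, for the finitely supported measures $\mu^p_x,\mu^p_y$ arising in this paper, $T$ is continuous. I claim this maximizer already has the stated property. Indeed, if some $x$ with $\mu_1(x)\le\mu_2(x)$ violated $\pi(x,x)=\mu_1(x)$, the move above would yield an optimal $\pi'$ with $\pi'(x,x)=\pi(x,x)+\eps$; the only diagonal cells the move can alter are $\pi(x,x)$ (increased) and, when $u=v$, $\pi(u,u)$ (also increased), while the decreased cells $\pi(x,v),\pi(u,x)$ are off-diagonal because $u,v\ne x$. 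Thus $T(\pi')\ge T(\pi)+\eps>T(\pi)$, contradicting maximality, and so $\pi(x,x)=\mu_1(x)$ for every $x$ with $\mu_1(x)\le\mu_2(x)$.

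The main obstacle is exactly this last coordination step: naive iteration of the local move risks reinjecting off-diagonal mass at a vertex already corrected, which would otherwise force a delicate ordering or limiting argument. Passing to a maximizer of $\sum_{z}\pi(z,z)$ removes all such bookkeeping, since the move can only strictly increase this quantity. The sole remaining technical point is existence of the maximizer; this is immediate in the finitely supported setting relevant here, and for general $\mu_1,\mu_2$ would follow from weak-$*$ compactness of $\Pi(\mu_1,\mu_2)$ together with upper semicontinuity of $T$.
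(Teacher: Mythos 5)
Your proof is correct. The elementary move at its core --- exchanging mass along the cycle $(x,v),(u,x)\to(x,x),(u,v)$ and using $d(u,v)\le d(u,x)+d(x,v)$ together with $d(x,x)=0$ to see the cost cannot increase --- is exactly the mechanism of the paper's proof, which performs the same exchange in bulk: it reroutes the whole surplus $\mu_1(x)-\pi(x,x)$ directly from the set $\mathcal{I}$ of vertices feeding $x$ to the set $\mathcal{O}$ of vertices fed by $x$, keeping $\mu_1(x)$ in place. Where you genuinely depart from the paper is in the global coordination step. The paper iterates its modification over the violating vertices and asserts this ``successively gives'' the desired plan; this does work, because the modification at $x$ only touches the diagonal entry at $x$ and off-diagonal entries elsewhere (using $\mathcal{I}\cap\mathcal{O}=\emptyset$), but the paper does not spell out why later corrections cannot undo earlier ones, and with infinitely many violating vertices one would need a further word. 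You replace the iteration by an extremal choice --- an optimal plan maximizing the total diagonal mass $T(\pi)=\sum_z\pi(z,z)$ --- and note that the local move strictly increases $T$, so the maximizer already satisfies the conclusion. This buys an order-free argument with no termination bookkeeping, at the modest price of establishing that the maximizer exists; your observation that this is immediate for the finitely supported measures $\mu_x^p,\mu_y^p$ relevant here, and follows in general from compactness of $\Pi(\mu_1,\mu_2)$ plus upper semicontinuity of $T$ (equivalently, lower semicontinuity of the mass of the open off-diagonal set), disposes of that point correctly.
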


This Lemma could be proved using Corollary 1.16 in
\cite{Vill03} (Invariance of
Kantorovich-Rubinstein distance under mass subtraction) but we present a proof in our much simpler context, for
the reader's convenience.

\begin{proof}
Let $\pi$ be an optimal transport plan transporting $\mu_{1}$ to $\mu_{2}$. Assume there exists an $x\in V$ with $\mu_{1}(x)\leq \mu_{2}(x)$, but $\pi(x,x)<\mu_{1}(x)$. Let $\mathcal{I} = \{z\in V\setminus \{x\} : \pi(z,x)>0\}$ and $\mathcal{O} = \{w\in V\setminus \{x\} : \pi(x,w)>0\}.$
Since $\pi$ is optimal, we must have $\mathcal{I}\cap\mathcal{O} = \emptyset.$ Then the relavent part of $\pi$ can be depicted as
\begin{center}
\begin{tikzpicture}
[x=1.5cm, y=1.5cm,
	vertex/.style={
		shape=circle, fill=black, inner sep=1.5pt	
	}
]
\draw[ultra thick, fill = gray] (0,0) to [out=90,in=180] (0.5,1) to [out=0,in=90] (1,0) to [out=270,in=0] (0.5,-1) to [out=180,in=270] (0,0);

\draw[ultra thick, fill = gray] (7,0) to [out=90,in=180] (7.5,1) to [out=0,in=90] (8,0) to [out=270,in=0] (7.5,-1) to [out=180,in=270] (7,0);

\node[vertex, label=below:$x$] (x) at (4, 0) {};

\draw[->] (1.1,0.2) -- node[above]{$\mu_{1}(x)-\pi(x,x)$} (3.7, 0.2);
\draw[->] (1.1,-0.2) -- node[below]{$\mu_{2}(x)-\mu_{1}(x)$} (3.7, -0.2);
\draw[->] (4.2,0) -- node[above]{$\mu_{1}(x)-\pi(x,x)$} (6.9, 0);

\draw [->] (3.95,0.05) to [out=175,in=270] (3.8,0.2)
to [out=90,in=180] node[above right]{$\pi(x,x)$}(4,0.4) to [out=0,in=90] (4.2,0.2) to [out=270,in=0] (4.05,0.05) ;

\node at (0.5,1.15) {$\mathcal{I}$};
\node at (7.5,1.15) {$\mathcal{O}$};
\end{tikzpicture}
\end{center}
We now modify $\pi$ to obtain a new transport plan $\pi'$ as follows:
\begin{center}
\begin{tikzpicture}
[x=1.5cm, y=1.5cm,
	vertex/.style={
		shape=circle, fill=black, inner sep=1.5pt	
	}
]
\draw[ultra thick, fill = gray] (0,0) to [out=90,in=180] (0.5,1) to [out=0,in=90] (1,0) to [out=270,in=0] (0.5,-1) to [out=180,in=270] (0,0);

\draw[ultra thick, fill = gray] (7,0) to [out=90,in=180] (7.5,1) to [out=0,in=90] (8,0) to [out=270,in=0] (7.5,-1) to [out=180,in=270] (7,0);

\node[vertex, label=below:$x$] (x) at (4, 0) {};

\draw[->] (1.1,0) -- node[below]{$\mu_{2}(x)-\mu_{1}(x)$} (3.8, 0);

\draw[->] (1,0.9) to [out=10,in=180] (4,1.1) to [out=0,in=170](7,0.9);

\draw [->] (3.95,0.05) to [out=175,in=270] (3.8,0.2)
to [out=90,in=180] node[above right]{$\mu_1(x)$}(4,0.4) to [out=0,in=90] (4.2,0.2) to [out=270,in=0] (4.05,0.05) ;

\node at (0.5,1.15) {$\mathcal{I}$};
\node at (7.5,1.15) {$\mathcal{O}$};
\node at (4,1.3) {$\mu_{1}(x)-\pi(x,x)$};
\end{tikzpicture}
\end{center}
This new transport plan $\pi'$ is still optimal (by the triangle inequality). Note that
$$\pi'(z,z) = \begin{cases}
\pi(z,z) & \text{if $z\neq x$,}
\\
\mu_{1}(x) & \text{if $z = x$.}
\end{cases}$$
Repeating this modification at all other vertices that violate the condition of the lemma successively gives us our required optimal transport plan.
\end{proof}

\begin{lemma}\label{ineqs}
Let $G= (V,E)$ be a locally finite graph. Let $x,y\in V$ with $x\sim y.$ Let $0\leq p_{1}\leq p_{2} \leq 1.$ If there exists a \textrm{\rm{1}--}Lipschitz function $\phi$ which is an optimal Kantorovich potential transporting $\mu_{x}^{p_{1}}$ to $\mu_{y}^{p_{1}}$ and transporting $\mu_{x}^{p_{2}}$ to $\mu_{y}^{p_{2}},$ then $W_{xy}:[0,1] \to \mathbb{R}$, 
$W_{xy}(p)=W_{1}(\mu_{x}^{p},\mu_{y}^{p})$, is linear on $[p_{1},p_{2}].$
\end{lemma}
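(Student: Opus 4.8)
The plan is to exploit that, for a \emph{fixed} test function $\phi$, the dual objective in the Kantorovich formulation is an affine function of the idleness $p$, and then to sandwich the convex function $W_{xy}$ between this affine lower bound and its own chord.

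First I would introduce, for the given joint optimal potential $\phi$, the function
$$ g(p) = \sum_{w\in V} \phi(w)\bigl(\mu_{x}^{p}(w)-\mu_{y}^{p}(w)\bigr). $$
Since $G$ is locally finite, only finitely many terms are nonzero, and since each of $\mu_{x}^{p}(w)$ and $\mu_{y}^{p}(w)$ is an affine function of $p$ (equal to $p$, to $(1-p)/d_{x}$, or to $0$ near $x$, and similarly near $y$), the function $g$ is affine in $p$. Moreover $\phi$ is $1$--Lipschitz independently of $p$, so it is a feasible competitor in the Kantorovich dual (Theorem \ref{Kantorovich}) for \emph{every} $p\in[0,1]$; hence
$$ W_{xy}(p) \ge g(p) \qquad \text{for all } p\in[0,1]. $$

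Next I would combine the hypothesis with convexity. The assumption that $\phi$ is optimal at both $p_{1}$ and $p_{2}$ means precisely that $W_{xy}(p_{1})=g(p_{1})$ and $W_{xy}(p_{2})=g(p_{2})$, so the affine function $g$ agrees with $W_{xy}$ at the two endpoints. By Remark \ref{concavity}, $W_{xy}$ is convex on $[0,1]$, so on $[p_{1},p_{2}]$ it lies below its chord joining $(p_{1},W_{xy}(p_{1}))$ and $(p_{2},W_{xy}(p_{2}))$. But this chord is the unique affine function through these two points, which is exactly $g$; therefore $W_{xy}(p)\le g(p)$ for $p\in[p_{1},p_{2}]$.

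Combining the two inequalities gives $W_{xy}(p)=g(p)$ on $[p_{1},p_{2}]$, and since $g$ is affine this shows $W_{xy}$ is linear on $[p_{1},p_{2}]$, as required. The argument is short; the only point that needs care is the observation that the endpoint-matching affine lower bound $g$ is simultaneously the chord forced by convexity, so that the lower bound from duality and the upper bound from convexity pinch $W_{xy}$ to a single affine function. Note that Lemma \ref{goodplanlemma} is not needed for this route, although it would be the natural ingredient in a more hands-on argument phrased directly in terms of transport plans.
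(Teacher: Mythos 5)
Your proposal is correct and is essentially the paper's own argument: both use the Kantorovich dual feasibility of $\phi$ at every $p$ to get the affine lower bound $W_{xy}(p)\ge g(p)$ (the paper phrases the affinity of $g$ via the identity $\mu_{x}^{\alpha p_{1}+(1-\alpha)p_{2}}=\alpha\mu_{x}^{p_{1}}+(1-\alpha)\mu_{x}^{p_{2}}$), and both use the convexity of $W_{xy}$ from Remark \ref{concavity} to pinch it against the chord through the two endpoints where optimality forces equality. No gap; the observation that Lemma \ref{goodplanlemma} is not needed here is also consistent with the paper.
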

\begin{proof}
Let $\alpha \in [0,1].$ The convexity of $W_{xy}$, see Remark \ref{concavity}, implies that
$$\alpha W_{xy}(p_{1})+(1-\alpha)W_{xy}(p_{2})\geq W_{xy}(\alpha p_{1}+ (1-\alpha)p_{2}).$$
It only remains to show the above inequality is in fact an equality. Observe that
\begin{align*}
\mu_{x}^{\alpha p_{1}+ (1-\alpha)p_{2}} & = \alpha \mu_{x}^{p_{1}}+(1-\alpha)\mu_{x}^{p_{2}},
\\
\mu_{y}^{\alpha p_{1}+ (1-\alpha)p_{2}} & = \alpha \mu_{y}^{p_{1}}+(1-\alpha)\mu_{y}^{p_{2}}.
\end{align*}
Then, setting $p=\alpha p_{1}+ (1-\alpha)p_{2}$, we have
\begin{align*}
W_{xy}(p) \geq & \sum_{w\in V} \phi(w)(\mu_{x}^{\alpha p_{1}+ (1-\alpha)p_{2}}(w)-\mu_{y}^{\alpha p_{1}+ (1-\alpha)p_{2}}(w))
\\
= & \sum_{w\in V}\phi(w)\left(\alpha \mu_{x}^{p_{1}}(w)+(1-\alpha)\mu_{x}^{p_{2}}(w) - \alpha \mu_{y}^{p_{1}}(w)-(1-\alpha)\mu_{y}^{p_{2}}(w)\right)
\\
= & \:\alpha\sum_{w\in V}\phi(w)\left(\mu_{x}^{p_{1}}(w)-\mu_{y}^{p_{1}}(w)\right)+ (1-\alpha)\sum_{w\in V}\phi(w)\left(\mu_{x}^{p_{2}}(w)-\mu_{y}^{p_{2}}(w)\right)
\\
= & \:\alpha W_{xy}(p_{1})+(1-\alpha)W_{xy}(p_{2}).
\end{align*}
\end{proof}

\begin{lemma}\label{phisharp}
Let $G= (V,E)$ be a locally finite graph. Let $x,y\in V$ with $x\sim y$ and $d_{x}\geq d_{y}.$ Let $ p\in \left(\frac{1}{1+d_{x}},1\right].$ Let $\phi$ be an optimal Kantorovich potential transporting $\mu^{ p}_{x}$ to $\mu^{ p}_{y}$. Then
$$\phi(x)-\phi(y) = 1.$$
\end{lemma}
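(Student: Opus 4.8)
The plan is to work directly with an optimal transport plan and the complementary-slackness relation of Lemma \ref{mass-sharpness}, exploiting the asymmetry $d_x \ge d_y$. Since $x \sim y$ we have $d(x,y)=1$, and because $\phi$ is $1$-Lipschitz we already know $\phi(x)-\phi(y)\le 1$; so it suffices to prove the reverse inequality $\phi(x)-\phi(y)\ge 1$. After normalising $\phi$ by an additive constant so that $\phi(y)=0$ (which changes neither the Lipschitz property nor optimality, as $\mu_x^p$ and $\mu_y^p$ are probability measures), the goal becomes $\phi(x)\ge 1$.

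First I would invoke Lemma \ref{goodplanlemma} to select an optimal transport plan $\pi$ from $\mu_x^p$ to $\mu_y^p$ with the property that every vertex $v$ satisfying $\mu_x^p(v)\le\mu_y^p(v)$ keeps all of its source mass, i.e. $\pi(v,v)=\mu_x^p(v)$ and hence $v$ exports nothing. The hypothesis $p>\frac{1}{1+d_x}$ is exactly the statement that $\mu_x^p(y)=\frac{1-p}{d_x}\le p=\mu_y^p(y)$ with a strict deficit; thus $\pi(y,y)=\frac{1-p}{d_x}$, and the remaining demand at $y$, namely $p-\frac{1-p}{d_x}>0$, must be supplied by other vertices. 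Consequently there is some $u\ne y$ with $\pi(u,y)>0$, and such a $u$ necessarily lies in the support of $\mu_x^p$, that is $u=x$ or $u\sim x$. By Lemma \ref{mass-sharpness}, $\phi(u)-\phi(y)=d(u,y)$.

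The main point is then to rule out the one awkward possibility, that $u$ is a common neighbour of $x$ and $y$ (where $d(u,y)=1$ would pin down $\phi(u)$ but say nothing about $\phi(x)$). This is precisely where $d_x\ge d_y$ enters: for a common neighbour $u$ one has $\mu_x^p(u)=\frac{1-p}{d_x}\le\frac{1-p}{d_y}=\mu_y^p(u)$, so by the choice of $\pi$ the vertex $u$ exports no mass at all, contradicting $\pi(u,y)>0$. Hence the supplier $u$ is either $x$ itself, giving $\phi(x)-\phi(y)=d(x,y)=1$ directly, or a neighbour of $x$ not adjacent to $y$, in which case $d(u,y)=2$ and Lemma \ref{mass-sharpness} gives $\phi(u)-\phi(y)=2$; combining this with the $1$-Lipschitz bound $\phi(u)-\phi(x)\le 1$ yields $\phi(x)-\phi(y)\ge\phi(u)-1-\phi(y)=1$. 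In every case $\phi(x)-\phi(y)=1$. I expect the only genuine obstacle to be exactly this common-neighbour case, and the clean way around it is to commit from the outset to the mass-preserving optimal plan furnished by Lemma \ref{goodplanlemma} rather than to an arbitrary optimal plan, since the preservation property turns $d_x\ge d_y$ into the statement that common neighbours cannot feed $y$.
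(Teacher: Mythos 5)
Your proposal is correct and follows essentially the same route as the paper: it uses the mass-preserving optimal plan from Lemma \ref{goodplanlemma} together with the complementary slackness of Lemma \ref{mass-sharpness}, uses $p>\frac{1}{1+d_x}$ to force a supplier $u\neq y$ from the support of $\mu_x^p$, rules out common neighbours via $d_x\ge d_y$, and closes the remaining case with the triangle/Lipschitz estimate $2=\phi(u)-\phi(y)\le 1+\phi(x)-\phi(y)$. No gaps.
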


\begin{proof}
Let $\pi$ be an optimal transport plan transporting $\mu^{ p}_{x}$ to $\mu^{ p}_{y}.$ We may assume that $\pi$ satisfies the conditions of Lemma \ref{goodplanlemma}. Since $p > \frac{1}{1+d_x}$, then $\mu_{x}^{ p}(y) = \frac{1-p}{d_{x}}< \frac{d_{x}p}{d_{x}} = p =\mu_{y}^{ p}(y)$, therefore there exists $z\in B_{1}(x) \setminus \{y\}$ such that $\pi(z,y)>0.$ If $z = x$ then $\phi(x)-\phi(y) = 1$, by Lemma \ref{mass-sharpness}. Suppose $z \sim y$ and $z\neq x$. Then observe that $\mu^{ p}_{x}(z) = \frac{1-p}{d_{x}} \le \frac{1-p}{d_{y}} = \mu^{ p}_{y}(z).$ Thus $\pi(z,y)= 0$, by Lemma \ref{goodplanlemma}, which contradicts our assumption that $\pi(z,y)>0$.

The only case left to consider is $z \sim x$, $z \nsim y$, $z \neq y$. Then $d(z,y) = 2$, in which case we have $\phi(z)-\phi(y) = 2$, by Lemma \ref{mass-sharpness}. Then
\begin{align*}
2 & = \phi(z) - \phi(y)
\\
& = \phi(z) - \phi(x) + \phi(x) - \phi(y)
\\
& \leq 1 + \phi(x) - \phi(y)
\\
& \leq 2,
\end{align*}
which implies $\phi(x) - \phi(y) = 1.$
\end{proof}
We are now ready to prove the main theorem of this section.

\begin{theorem}\label{lastpiece}
Let $G=(V,E)$ be a locally finite graph and let $x,y\in V$ with $x \sim y$ and $d_{x}\geq d_{y}$. Then $p \mapsto \kappa_{p}(x,y)$ is linear over $\left[\frac{1}{d_{x}+1},1\right].$
\end{theorem}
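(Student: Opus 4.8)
The plan is to apply Lemma \ref{ineqs} with $p_2 = 1$, reducing everything to the construction of a single $1$--Lipschitz potential $\phi$ that is simultaneously optimal for every idleness $p \in \left[\frac{1}{d_x+1}, 1\right]$. If I can exhibit one such $\phi$ working at both endpoints $p_1 = \frac{1}{d_x+1}$ and $p_2 = 1$ (and hence, by the linearity Lemma, on the whole interval), I am done, since linearity of $W_{xy}$ on $\left[\frac{1}{d_x+1},1\right]$ immediately gives linearity of $\kappa_p(x,y) = 1 - W_{xy}(p)$ there.

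First I would record what Lemma \ref{phisharp} buys us: for every $p$ in the \emph{open} interval $\left(\frac{1}{d_x+1},1\right]$, any optimal potential satisfies $\phi(x) - \phi(y) = 1$. The strategy is to show that this rigidity forces the value $W_{xy}(p)$ to agree with a fixed affine function of $p$ across the interval. Concretely, using the representation from the proof of Theorem \ref{linear}, namely $W_1(\mu_x^p,\mu_y^p) = \max\{f_{-1}(p), f_0(p), f_1(p)\}$ where
\begin{equation*}
f_{j}(p) = \left(p - \frac{1-p}{d_y}\right)j + \frac{1-p}{d_x d_y}c_j,
\end{equation*}
Lemma \ref{phisharp} (normalising so $\phi(y)=0$, which forces $\phi(x)=1$) tells me that on $\left(\frac{1}{d_x+1},1\right]$ the maximum is attained by the $j=1$ branch, i.e. $W_{xy}(p) = f_1(p)$. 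Since $f_1$ is a single linear function, $W_{xy}$ is linear on that open interval, and by continuity of $W_{xy}$ (it is convex on $[0,1]$, hence continuous) the linearity extends to the closed interval $\left[\frac{1}{d_x+1},1\right]$.

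The cleaner route, and the one I would actually write up, is to bypass the case analysis on $j$ and instead feed the conclusion of Lemma \ref{phisharp} directly into Lemma \ref{ineqs}. I would pick an optimal Kantorovich potential $\phi$ at the right endpoint $p_2 = 1$ (where transport is trivial and $\phi(x)-\phi(y)=1$ is easy), then argue that the \emph{same} $\phi$ is optimal at $p_1 = \frac{1}{d_x+1}$ as well. The key point is that once $\phi(x)-\phi(y)$ is pinned to $1$ throughout the interior, the value of the supremum in the Kantorovich dual is governed by a $p$-independent optimiser over the neighbours of $x$ and $y$ (the $F(\phi)$ term), so a potential optimal at one interior idleness is optimal at all of them; the endpoint values then follow by continuity.

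The main obstacle will be the endpoint $p = \frac{1}{d_x+1}$ itself, which Lemma \ref{phisharp} explicitly excludes. I expect to handle it by a continuity/limiting argument: choosing a sequence $p_n \downarrow \frac{1}{d_x+1}$ in the open interval where a common optimal potential $\phi$ exists, invoking Lemma \ref{ineqs} to get linearity of $W_{xy}$ on each $\left[\frac{1}{d_x+1} + \eps, 1\right]$, and then passing to the limit using the continuity of the convex function $W_{xy}$ to conclude linearity on the full closed interval. The only subtlety to watch is that $\phi$ can be taken fixed (independent of $p_n$) across the interior --- which is exactly what the structure of $f_1(p)$, or equivalently the $p$-independence of the optimal $F(\phi)$ forced by $\phi(x)-\phi(y)=1$, guarantees.
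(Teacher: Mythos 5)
Your proposal is correct and is essentially the paper's own argument: Lemma \ref{phisharp} pins $\phi(x)-\phi(y)=1$ for an optimal potential at an interior idleness $p_0\in\left(\frac{1}{d_x+1},1\right)$, such a $\phi$ is automatically optimal at $p=1$ (since $W_{xy}(1)=d(x,y)=1$ and any $1$--Lipschitz function with $\phi(x)-\phi(y)=1$ attains this), Lemma \ref{ineqs} then gives linearity on $[p_0,1]$, and continuity of the convex function $W_{xy}$ extends it to the closed interval. The one point to state carefully in the write-up is the direction of the argument: you must take $\phi$ optimal at the interior point and deduce optimality at $p=1$, not ``pick an optimal potential at $p_2=1$'' and push it inward, since at $p=1$ \emph{every} $1$--Lipschitz function with $\phi(x)-\phi(y)=1$ is optimal and a generic such choice will not be optimal at $p_0$ --- your final paragraph (and your alternative route via $W_{xy}=f_1$ on the open interval) already has this right.
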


\begin{proof}
Let $1 > p_0 > \frac{1}{d_{x}+1}$ and $\phi$ be an optimal Kantorovich potential transporting $\mu^{p_0}_{x}$ to $\mu^{ p_0}_{y}$. Then, by Lemma \ref{phisharp}, we have $\phi(x)-\phi(y) = 1.$ Note that any 1--Lipschitz $\psi$ satisfying $\psi(x)-\psi(y)$ is an optimal Kantorovich potential transporting $\mu_{x}^{1}$ to $\mu_{y}^{1}.$ Thus, by Lemma \ref{ineqs}, $p \mapsto \kappa_{p}(x,y)$ is linear over
$[p_0, 1]$. By continuity of $p \mapsto \kappa_{p}(x,y)$, this linearity extends to $\left[\frac{1}{d_{x}+1}, 1\right].$
\end{proof}

\begin{rem}\label{trick}
  Note that the above proof shows the existence a $1$--Lipschitz function
  $\phi$ with $\phi(x)-\phi(y)=1$, which is an optimal Kantorovich potential
  for all $p \in \left[ \frac{1}{d_x+1},1 \right]$:
  We choose $\phi$ to be an optimal Kantorovich potential transporting
  $\mu^{p_0}_{x}$ to $\mu^{ p_0}_{y}$ for some
  $1 > p_0 > \frac{1}{d_{x}+1}$ and satisfying $\phi(x)-\phi(y)=1$, as
  in the proof of Theorem \ref{lastpiece}. Then both $W_{xy}$ and the function
  $$ p \mapsto \sum_{w \in V} \phi(w)(\mu_x^p(w)-\mu_y^p(w)) $$
  are linear over $[\frac{1}{d_x+1},1]$ and agree at $p = p_0$ and
  $p = 1$.  Therefore, they agree on the whole interval and,
  consequently, $\phi$ is an optimal Kantorovich potential for all
  $p \in \left[ \frac{1}{d_x+1},1 \right]$.
\end{rem}

\section{Length of the first linear part}\label{first}

\begin{lemma}\label{halvingworks}
Let $G=(V,E)$ be a locally finite graph.   Let $F$ be as defined in equation \eqref{eq:F}. 
 Then
  $$\sup_{\substack{\phi\in \textrm{\rm{1}--{\rm Lip}}\\\phi : V\rightarrow \mathbb{Z}\\\phi(x) = \phi(y) = 0}}F(\phi) = \sup_{\substack{\phi\in \textrm{\rm{1}--{\rm Lip}}\\\phi : V\rightarrow \mathbb{Z}/2\\\phi(x) = \phi(y) = 0}}F(\phi).$$
\end{lemma}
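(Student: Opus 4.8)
The plan is to prove the two inequalities separately. One direction is immediate: since $\mathbb{Z}\subseteq\mathbb{Z}/2$, every competitor in the left-hand supremum is also admissible on the right, so the right-hand supremum is at least the left-hand one. The substance is the reverse inequality, for which I would show that any half-integer-valued $1$--Lipschitz function $\phi$ with $\phi(x)=\phi(y)=0$ can be replaced by an integer-valued competitor without decreasing $F$.

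The key tool is a rounding identity combined with the linearity of $F$. Given such a $\phi$, I would consider the two integer-valued functions $\floor{\phi}$ and $\ceil{\phi}$. By Lemma \ref{fandc} both are $1$--Lipschitz (note that Lemma \ref{fandc} is stated for arbitrary real-valued $1$--Lipschitz functions, so it applies here even though $\phi$ is not integer-valued), and since $\phi(x)=\phi(y)=0\in\mathbb{Z}$ they also vanish at $x$ and $y$; hence both are admissible integer-valued competitors. The crucial observation is that, because $\phi$ takes values in $\mathbb{Z}/2$, at every vertex $v$ we have the exact identity $\phi(v)=\tfrac12\bigl(\floor{\phi(v)}+\ceil{\phi(v)}\bigr)$, which holds for both integer values (where floor and ceiling coincide with $\phi(v)$) and half-integer values (where they straddle $\phi(v)$). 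Since $F$ is a fixed linear combination of the values of $\phi$ on the neighbours of $x$ and $y$ (and these are finite sums, as the graph is locally finite), this pointwise identity transfers to $F$, giving $F(\phi)=\tfrac12\bigl(F(\floor{\phi})+F(\ceil{\phi})\bigr)\le\max\{F(\floor{\phi}),F(\ceil{\phi})\}$.

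As both $\floor{\phi}$ and $\ceil{\phi}$ lie in the integer-valued feasible set, this last bound is at most the integer-valued supremum. Taking the supremum over all admissible half-integer-valued $\phi$ then yields the reverse inequality, and combining the two directions completes the proof. I do not expect a genuine obstacle here: the argument reuses the same floor/ceiling rounding as in Lemma \ref{integerness}, now exploited to halve the value set rather than to integralize it. The only points requiring care are verifying that the averaging identity $\phi=\tfrac12(\floor{\phi}+\ceil{\phi})$ holds precisely because the values lie in $\mathbb{Z}/2$ (it would fail for general real $\phi$), and confirming that $\floor{\phi}$ and $\ceil{\phi}$ remain admissible, i.e.\ both the $1$--Lipschitz property from Lemma \ref{fandc} and the vanishing constraint at $x$ and $y$.
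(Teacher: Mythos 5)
Your proof is correct and uses essentially the same argument as the paper: the identity $\phi = \tfrac12(\floor{\phi}+\ceil{\phi})$ for $\mathbb{Z}/2$-valued $\phi$, Lemma \ref{fandc} for admissibility of the rounded functions, and the linearity of $F$. The only cosmetic difference is that the paper picks a maximizer $\phi_0$ and deduces $F(\phi_0)=F(\floor{\phi_0})=F(\ceil{\phi_0})$, whereas you bound an arbitrary competitor by $\max\{F(\floor{\phi}),F(\ceil{\phi})\}$ and take suprema at the end, which avoids assuming the supremum is attained.
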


\begin{proof}
Pick $\phi_{0}\in \textrm{1--Lip}$ such that $\phi_{0} : V\rightarrow \mathbb{Z}/2, \phi_{0}(x) = \phi_{0}(y) = 0$ and
$$F(\phi_{0}) = \sup_{\substack{\phi\in \textrm{1--Lip}\\\phi : V\rightarrow \mathbb{Z}/2\\\phi(x) = \phi(y) = 0}}F(\phi).$$
Note that
$$\phi_{0}(v) = \frac{\floor{\phi_{0}(v)}+\ceil{\phi_{0}(v)}}{2},$$
for all $v\in V.$ Thus
$$F(\phi_{0}) = \frac{F(\floor{\phi_{0}})+F(\ceil{\phi_{0}})}{2}.$$
By combining this with $F(\phi_{0}) \geq F(\floor{\phi_{0}})$ and $F(\phi_{0}) \geq F(\ceil{\phi_{0}})$ we obtain $$F(\phi_{0}) = F(\floor{\phi_{0}}) = F(\ceil{\phi_{0}}).$$
Since $\floor{\phi_{0}}: V\rightarrow \mathbb{Z}$ this completes the proof.
\end{proof}

The rest of this section is devoted to the proof of the following result.

\begin{theorem}
Let $G=(V,E)$ be a locally finite graph. Let $x,y\in V$ with $x\sim y$ and $d_{x}\geq d_{y}.$ Let $\ell={\rm{lcm}}(d_{x},d_{y}).$ Then $p \mapsto \kappa_{p}(x,y)$ is linear over $\left[0,\frac{1}{\ell+1}\right].$
\end{theorem}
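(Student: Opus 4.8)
The plan is to exploit the representation of $W_{xy}$ obtained in the proof of Theorem \ref{linear}, namely $W_{xy}(p)=\max\{f_{-1}(p),f_0(p),f_1(p)\}$ with $f_j(p)=j\bigl(p-\tfrac{1-p}{d_y}\bigr)+\tfrac{1-p}{d_xd_y}c_j$ and $c_j=\sup_{\phi\in\mathcal A_j}F(\phi)$. Thus $\kappa_p=1-\max_j f_j(p)$, and the first linear part of the idleness function is carried by whichever line $f_j$ forms the left end of this upper envelope. I set $j^\ast=\max\{j\in\{-1,0,1\}:f_j(0)=W_{xy}(0)\}$, the line that is maximal just to the right of $0$. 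The goal is to show that $f_{j^\ast}$ remains maximal on all of $\bigl[0,\tfrac1{\ell+1}\bigr]$: for then the maximiser of $c_{j^\ast}$ (which realises $\langle\phi,\mu_x^p-\mu_y^p\rangle=f_{j^\ast}(p)$) is a common optimal Kantorovich potential at $p=0$ and at $p=\tfrac1{\ell+1}$, and Lemma \ref{ineqs} delivers linearity on the whole interval.

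Two structural facts drive the argument. First, a divisibility observation: writing $g=\gcd(d_x,d_y)$, $d_x=g\,d_x'$, $d_y=g\,d_y'$, the functional from \eqref{eq:F} factors as $F(\phi)=g\bigl(d_y'\sum_{z\sim x,\,z\neq y}\phi(z)-d_x'\sum_{z\sim y,\,z\neq x}\phi(z)\bigr)$, so $F(\phi)\in g\mathbb Z$ for every integer-valued $\phi$; hence each $c_j\in g\mathbb Z$. Second, I invoke Lemma \ref{halvingworks}: averaging optimisers $\phi_1\in\mathcal A_1$ and $\phi_{-1}\in\mathcal A_{-1}$ produces a half-integer-valued $1$--Lipschitz function vanishing at $x$ and $y$ with $F$-value $\tfrac12(c_1+c_{-1})$, which by Lemma \ref{halvingworks} is at most $c_0$. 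Therefore $c_1+c_{-1}\le 2c_0$, i.e.\ $c_1-c_0\le c_0-c_{-1}$. This ordering forces the breakpoints of the envelope to occur in the natural order $f_{-1}\to f_0\to f_1$, so the first breakpoint is the crossing of $f_{j^\ast}$ with the neighbouring line $f_{j^\ast+1}$.

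It remains to locate that crossing. A direct computation shows that $f_j$ and $f_{j+1}$ meet at $p=\tfrac{d_x-\Delta}{d_x(d_y+1)-\Delta}$ with $\Delta=c_{j+1}-c_j$, and, using $\tfrac{d_xd_y}{\ell}=g$, that this value is $\ge\tfrac1{\ell+1}$ precisely when $\Delta\le d_x-g$. Apply this with $j=j^\ast$, $\Delta=c_{j^\ast+1}-c_{j^\ast}$. Because $j^\ast$ is the \emph{largest} index attaining the maximum at $p=0$, we have $f_{j^\ast+1}(0)<f_{j^\ast}(0)$; since $f_{j+1}(0)-f_j(0)=\tfrac{\Delta-d_x}{d_xd_y}$, this means $\Delta<d_x$ strictly. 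Combined with $g\mid\Delta$ and $g\mid d_x$, the largest multiple of $g$ strictly below $d_x$ is $d_x-g$, so $\Delta\le d_x-g$, which is exactly the required bound. (If $j^\ast=1$ the envelope is a single line and $\kappa_p$ is linear on all of $[0,1]$.)

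The step I expect to be the main obstacle is producing the factor $g$ rather than the naive bound $\Delta<d_x$: a priori the slope could change anywhere in $(d_x-g,d_x)$, which would push the first breakpoint below $\tfrac1{\ell+1}$. The resolution is the interplay above — the gcd enters only through the divisibility $g\mid F(\phi)$, the strictness $\Delta<d_x$ is obtained by choosing $j^\ast$ to be the \emph{largest} maximiser, and Lemma \ref{halvingworks} is needed to rule out $f_1$ overtaking $f_{-1}$ before $f_0$ does. I would then record that the maximiser of $c_{j^\ast}$ is an optimal Kantorovich potential at both $p=0$ and $p=\tfrac1{\ell+1}$, and conclude by Lemma \ref{ineqs} that $p\mapsto\kappa_p(x,y)$ is linear on $\bigl[0,\tfrac1{\ell+1}\bigr]$.
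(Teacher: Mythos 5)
Your proposal is correct and follows essentially the same route as the paper: both rest on the envelope representation $W_{xy}=\max_j f_j$ from the proof of Theorem \ref{linear}, the divisibility $g\mid c_j$, Lemma \ref{halvingworks} to control the $f_{-1}$ versus $f_1$ interaction, and the observation that the admissible crossing points are quantized to the set $\left\{K/(\ell+K):K\in\mathbb{Z}_{\ge 0}\right\}$. The only differences are organizational: you bound just the breakpoint of the line active at $p=0$ (using strictness at $0$ to get $\Delta\le d_x-g$) rather than all pairwise intersections, and you route the conclusion through Lemma \ref{ineqs}, whereas the paper reads the linearity off the envelope directly.
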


\begin{proof}
  Let $F,\mathcal{A}_{j}, c_{j}, f_{j}$ be as defined in the proof of
  Theorem \ref{linear}. In order to bound the length of the first linear part of
  $\kappa_{p}$, we look at the intersection points of the functions
  $f_{j}.$ First we derive inequalities between the constants $c_{j}.$ Note that
$$f_{j}\left(\frac{1}{d_{y}+1}\right) = \frac{1}{(d_{y}+1)d_{x}}c_{j},$$
for $j\in\{-1,0,1\}$. We claim that $ f_1\left(\frac{1}{d_y+1}\right) \ge f_j\left(\frac{1}{d_{y}+1}\right).$ It then follows that $c_{1}\geq c_{0}$ and $c_{1}\geq c_{-1}$. We now prove the claim:
\\
Note that $\frac{1}{d_y+1} \in \left[ \frac{1}{d_x+1},1 \right]$ and that,
by Remark \ref{trick}, there exists an optimal Kantorovich potential
$\phi$ at idleness $\frac{1}{d_y+1}$ with $\phi(x)-\phi(y)=1$. Therefore, by equation \eqref{eq:max},
$$ f_1\left(\frac{1}{d_y+1}\right) = W_{xy}\left(\frac{1}{d_y+1}\right) = \max\left\{ f_{-1}\left(\frac{1}{d_y+1}\right), f_0\left(\frac{1}{d_y+1}\right), f_1\left(\frac{1}{d_y+1}\right) \right\}, $$
which proves the claim.

Let $\phi_{j}\in\mathcal{A}_{j}$ satisfy  $F(\phi_{j})=c_j=\max_{\mathcal{A}_{j}} F$. Let $\psi = \frac{\phi_{-1}+\phi_{1}}{2}$. Note
that $\psi$ is 1--Lipschitz and
$\psi(x)=\psi(y)=0$. The
function $\psi$ may fail to be integer-valued
but we note that $\psi: V \rightarrow \mathbb{Z}/2$ and so, by Lemma \ref{halvingworks}, we have
\begin{equation}\label{eq:c-1c0c1}
c_{0}\geq F\left(\frac{\phi_{-1}+\phi_{1}}{2}\right)= \frac{c_{-1}+c_{1}}{2} \geq c_{-1}.
\end{equation}
Therefore $$c_{1}\geq c_{0} \geq c_{-1}.$$

Let $g = {\rm gcd}(d_{x},d_{y}).$ Since the constants $c_{j}$ are
integer linear combinations of $d_{x}$ and $d_{y}$, we have $g| c_{j}$
for $j\in\{-1,0,1\}.$ For the computation of the possible intersection points of $f_{j}$, we will make use of the following simple observation. Let $b>0$ and suppose that
$0\leq \frac{a}{a+b}\leq 1.$ Then $a>0.$

Suppose that $p'$ satisfies $f_{-1}( p') = f_{0}( p').$ Then
$$ p' = \frac{d_{x}-(c_{0}-c_{-1})}{d_{x}d_{y}+d_{x}-(c_{0}-c_{-1})}.$$
We can write $ c_{0}-c_{-1} = d_{x}-Kg$ for some $K\in\mathbb{Z}$. Then
$$ p' = \frac{Kg}{d_{x}d_{y}+Kg} = \frac{K}{\ell+K}, $$
with $\ell = {\rm{lcm}}(d_x,d_y)$.
Since $0\leq p' \leq 1$ we have $K\geq 0.$ Thus the smallest strictly positive intersection point is $ p'=\frac{1}{\ell+1}$. 

Now suppose that $p'$ satisfies $f_{1}( p') = f_{0}( p').$ Then
$$ p' = \frac{d_{x}-(c_{1}-c_{0})}{d_{x}d_{y}+d_{x}-(c_{1}-c_{0})}.$$
We can write $ c_{1} - c_{0} = d_{x}-Kg$ for some $K\in\mathbb{Z}$. Then
$$
 p' = \frac{Kg}{d_{x}d_{y}+Kg} = \frac{K}{\ell+K}.
$$
Since $0\leq p' \leq 1$ we have $K\geq 0.$ Thus the smallest strictly positive intersection point is again $ p'=\frac{1}{\ell+1}.$

Now suppose that $p'$ satisfies $f_{-1}( p') = f_{1}( p').$ Then 
\[
f_{-1}(p') =\frac{1}{2} ( f_{-1}(p')+f_{1}(p') ) = \frac{1-p'}{d_x d_y} \, \frac{c_{-1}+c_1}{2} \stackrel{\eqref{eq:c-1c0c1}}{\le} 
\frac{1-p'}{d_x d_y} \, c_0 = f_0(p').
\]
In particular 
$$ f_1(p') = f_{-1}(p') =\frac{1}{2} ( f_{-1}(p')+f_{1}(p') ) \leq f_0(p'). $$
Thus either $f_{0}(p')>f_{-1}(p')$ and $f_{0}(p')>f_{1}(p')$, in which case there is no turning point at $p'$, or $f_{0}(p') = f_{-1}(p') = f_{1}(p')$, in which case $p'$ is one of the points we have already considered. Thus $p\mapsto\kappa_{p}(x,y)$ is linear over $[0,\frac{1}{\ell+1}].$
\end{proof}

Let us finish this section with some observations about relations
between various different curvature values.
 Assume that  $d_y | d_x$. Then lcm$(d_x,d_y) = \max(d_x,d_y)$ and so, by Theorem \ref{main},
 $p \mapsto \kappa_{p}(x,y)$ has at most two linear parts. We can give a formula for $\kappa_{p}(x,y)$ in terms of the curvatures $\kappa_0(x,y)$ and $\kappa(x,y)$ by using the fact that $\kappa_{p}(x,y)$ can change its slope only at $p=\frac{1}{d_x+1}$ and that $\kappa_1=0$, $\kappa_1'=-\kappa$. This formula, given in the following theorem, emerges via a
straightforward calculation and applies, in particular, to all regular
graphs.

\begin{theorem}\label{formula}
Let $G = (V,E)$ be a locally finite graph. Let $x,y\in V$ with $x\sim y$ and $d_{y}| d_{x}.$ Then
$$\kappa_{ p}(x,y) = \begin{cases} (d_{x}\kappa(x,y)-(d_{x}+1)\kappa_{0}(x,y)) p + \kappa_{0}(x,y), &   \text{if $p\in [0,\frac{1}{d_{x}+1}]$,}\\
(1- p)\kappa(x,y), &   \text{if $p \in [\frac{1}{d_{x}+1},1]$.}
\end{cases} $$
\end{theorem}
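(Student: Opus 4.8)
The plan is to exploit the fact that the hypothesis $d_y \mid d_x$ collapses the two endpoints appearing in Theorem \ref{main} into a single point, so that the idleness function consists of at most two linear pieces meeting at $p = \frac{1}{d_x+1}$; each piece is then pinned down by reading off its boundary data. First I would record that $d_y \mid d_x$ forces $d_x \ge d_y$, hence $ {\rm lcm}(d_x,d_y) = \max(d_x,d_y) = d_x$. By Theorem \ref{main}, $p \mapsto \kappa_p(x,y)$ is then linear on $\left[0,\frac{1}{d_x+1}\right]$ and on $\left[\frac{1}{d_x+1},1\right]$, so the only point at which the slope can change is $p = \frac{1}{d_x+1}$.

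Next I would determine the second (right-hand) linear piece. At $p=1$ the measures $\mu_x^1$ and $\mu_y^1$ are the point masses at $x$ and $y$, so $W_1(\mu_x^1,\mu_y^1) = d(x,y) = 1$ and hence $\kappa_1(x,y) = 0$. Writing the linear function on $\left[\frac{1}{d_x+1},1\right]$ as $p \mapsto a + bp$ and imposing $a+b = 0$ gives $a + bp = -b(1-p)$; the defining limit $\kappa(x,y) = \lim_{p\to 1}\frac{\kappa_p(x,y)}{1-p}$ then immediately yields $-b = \kappa(x,y)$, so on this interval $\kappa_p(x,y) = (1-p)\kappa(x,y)$, which is the second case of the statement.

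Finally I would pin down the first (left-hand) piece. Since $\kappa_0(x,y)$ is by definition the value at $p=0$, the linear function on $\left[0,\frac{1}{d_x+1}\right]$ has the form $p \mapsto \kappa_0(x,y) + mp$ for an unknown slope $m$. The continuity of $p \mapsto \kappa_p(x,y)$ (used already in the proof of Theorem \ref{lastpiece}) forces the two pieces to agree at $p = \frac{1}{d_x+1}$; equating $\kappa_0(x,y) + \frac{m}{d_x+1}$ with the value $\frac{d_x}{d_x+1}\kappa(x,y)$ obtained from the second piece and solving gives $m = d_x\kappa(x,y) - (d_x+1)\kappa_0(x,y)$, which is exactly the first case.

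I do not expect a genuine obstacle here: once Theorem \ref{main} supplies the two-piece structure, everything reduces to extracting three boundary data (the value $\kappa_0(x,y)$ at $p=0$, the vanishing $\kappa_1(x,y)=0$ at $p=1$, and the slope at $p=1$ encoded by $\kappa$) and solving the resulting elementary linear conditions. The only points requiring any care are the verification that $\kappa_1(x,y)=0$ and the correct use of the limit definition of $\kappa$; the divisibility hypothesis $d_y \mid d_x$ enters solely to guarantee, via Theorem \ref{main}, that no third linear part can appear between the two prescribed intervals.
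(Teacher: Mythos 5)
Your proposal is correct and follows essentially the same route as the paper, which likewise derives the formula from the fact that $d_y \mid d_x$ forces ${\rm lcm}(d_x,d_y)=\max(d_x,d_y)=d_x$ (so Theorem \ref{main} leaves only a possible slope change at $p=\tfrac{1}{d_x+1}$), together with $\kappa_1=0$, $\kappa_1'=-\kappa$, the value $\kappa_0$ at $p=0$, and continuity at the breakpoint. The paper leaves this as a ``straightforward calculation''; your write-up simply carries it out explicitly.
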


\begin{rem}
  As mentioned earlier $\kappa_{\frac{1}{2}},\kappa_{\frac{1}{d+1}}$
  and $\kappa$ have been studied in various articles. In fact, the identity
  \begin{equation}
  \label{eq:**}
   \kappa_p(x,y) = (1-p)\kappa(x,y) 
  \end{equation}
  holds true at all edges (even those whose Ollivier-Ricci idleness
  function has three linear parts) and for all values
  $p \in \left [\frac{1}{\max\{d_x,d_y\}+1},1 \right]$. Equation \eqref{eq:**} follows from Theorem
  \ref{lastpiece} and the fact that $\kappa_1=0$, $\kappa_1'=-\kappa$. As a consequence, we have
  $$\kappa = 2\kappa_{\frac{1}{2}} = \frac{d+1}{d}\kappa_{\frac{1}{d+1}}.$$
\end{rem}

We end this section with a connection between $\kappa$ and $\kappa_{0}.$

\begin{theorem}
Let $G = (V,E)$ be a locally finite graph. Let $x,y \in V$ with $x\sim y$ and $d_{x}\geq d_{y}.$ Then
$$\kappa_{0}(x,y)\leq \kappa(x,y)\leq \kappa_{0}(x,y)+\frac{2}{d_{x}}.$$
\end{theorem}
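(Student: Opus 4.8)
The plan is to avoid the constants $c_j$ entirely and instead exploit the affine dependence of the measures on the idleness together with the value of $W_1$ on the last linear part, which is already known. Write $p_\ast=\frac{1}{d_x+1}$ and recall from \eqref{eq:**} (valid since $d_x\ge d_y$) that $\kappa_{p_\ast}(x,y)=(1-p_\ast)\kappa(x,y)$, equivalently
$$W_1(\mu_x^{p_\ast},\mu_y^{p_\ast})=1-(1-p_\ast)\kappa(x,y).$$
The only structural fact I need is that, for every $p$, $\mu_x^{p}=p\,\delta_x+(1-p)\mu_x^{0}$ and $\mu_y^{p}=p\,\delta_y+(1-p)\mu_y^{0}$, so that for any fixed $\psi:V\to\IR$ the quantity $\sum_w\psi(w)(\mu_x^{p}-\mu_y^{p})(w)$ is an affine function of $p$ interpolating between its values at $p=0$ and $p=1$.

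For the lower bound $\kappa_0(x,y)\le\kappa(x,y)$ I would simply invoke concavity (Remark \ref{concavity}): since $p\mapsto\kappa_p(x,y)$ is concave and coincides with the line $p\mapsto(1-p)\kappa(x,y)$ on $[p_\ast,1]$, that line is a supporting line of the graph from above, hence $\kappa_p(x,y)\le(1-p)\kappa(x,y)$ for all $p\in[0,1]$; evaluating at $p=0$ gives the claim.

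The upper bound is where $d_x\ge d_y$ and integrality enter. Let $\psi$ be an \emph{integer-valued} optimal Kantorovich potential transporting $\mu_x^{0}$ to $\mu_y^{0}$, which exists by Lemma \ref{integerness}; normalising $\psi(y)=0$, the $1$-Lipschitz condition together with $x\sim y$ forces $j:=\psi(x)\in\{-1,0,1\}$. Then $W_1(\mu_x^{0},\mu_y^{0})=\sum_w\psi(w)(\mu_x^{0}-\mu_y^{0})(w)$, while using $\psi$ merely as a test function at idleness $p_\ast$ in Kantorovich duality (Theorem \ref{Kantorovich}) gives
$$W_1(\mu_x^{p_\ast},\mu_y^{p_\ast})\ \ge\ \sum_w\psi(w)(\mu_x^{p_\ast}-\mu_y^{p_\ast})(w)\ =\ p_\ast\, j+(1-p_\ast)\,W_1(\mu_x^{0},\mu_y^{0}),$$
where the last equality is exactly the affine-interpolation identity above. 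Substituting the known value of $W_1(\mu_x^{p_\ast},\mu_y^{p_\ast})$, writing $W_1(\mu_x^{0},\mu_y^{0})=1-\kappa_0(x,y)$, dividing by $1-p_\ast$ and using $\frac{p_\ast}{1-p_\ast}=\frac1{d_x}$, a one-line rearrangement yields
$$\kappa(x,y)\ \le\ \kappa_0(x,y)+\frac{1-j}{d_x}.$$
Since $j\ge-1$ we have $1-j\le2$, and the bound $\kappa(x,y)\le\kappa_0(x,y)+\frac{2}{d_x}$ follows.

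The step that carries all the weight is $j\ge-1$: the whole strength of the estimate rests on the optimal potential at $p=0$ being integer-valued, so that $\psi(x)-\psi(y)$ lands in $\{-1,0,1\}$ rather than merely in $[-1,1]$. This is precisely the content of Lemma \ref{integerness}, and it is what converts the soft interpolation inequality into the sharp constant $\frac{2}{d_x}$. Everything else is a routine affine computation, and in particular no case analysis on the number of linear pieces of the idleness function is required.
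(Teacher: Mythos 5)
Your proof is correct and follows essentially the same route as the paper: the lower bound via concavity and the tangent/supporting line at $p=1$ is the paper's argument, and for the upper bound the paper likewise takes an optimal potential $\phi$ at $p=0$ with $\phi(y)=0$, tests it against the measures at $p_\ast=\frac{1}{d_x+1}$ (your affine-interpolation identity, written out coordinatewise there), and combines with $\kappa_{p_\ast}=(1-p_\ast)\kappa$. One correction to your closing commentary: the integer-valuedness of $\psi$ (Lemma \ref{integerness}) is not actually doing any work here --- the bound $1-j\le 2$ only needs $\psi(x)-\psi(y)\ge -1$, which already follows from $\psi$ being $1$-Lipschitz and $x\sim y$, and this is exactly what the paper uses; integrality would only help if a value of $\psi(x)-\psi(y)$ strictly between $-1$ and $1$ had to be excluded, which it does not.
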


\begin{proof}
The first inequality follows from the fact that the graph of a concave function lies below its tangent line at each point and that
$\kappa_1=0$, $ \kappa = -\kappa_1'$:
\[
\kappa_0 \le \kappa_1 + \kappa'_1 (0-1) = \kappa.
\]
Now we prove the second inequality.
Let $\phi$ be a 1--Lipschitz function with $\phi(y)=0$ such that
$$W_{xy}(0) = \sum_{w \in V} \phi(w) (\mu_x^0(w) - \mu_y^0(w)) =
\frac{-1}{d_{y}}\phi(x)  + \frac{1}{d_{x}}\sum_{\substack{z\sim x\\ z\neq y}} \phi(z) - \frac{1}{d_{y}}\sum_{\substack{z\sim y \\ z\neq x}}\phi(z).$$
Then
\begin{align*}
W_{xy}\left(\frac{1}{d_{x}+1}\right)
 & \geq \sum_{w \in V} \phi(w) (\mu_x^{\frac{1}{d_x+1}}(w) - \mu_y^{\frac{1}{d_x+1}}(w))
\\
& = \left(\frac{1}{d_{x}+1}-\frac{d_{x}}{(d_{x}+1)d_{y}}\right)\phi(x)  + \frac{1}{d_{x}+1}\sum_{\substack{z\sim x \\ z\neq y}} \phi(z) - \frac{d_{x}}{(d_{x}+1)d_{y}}\sum_{\substack{z\sim y \\ z\neq x}}\phi(z).
\end{align*}
Thus
\begin{align*}
\frac{d_{x}+1}{d_{x}}W_{xy}\left(\frac{1}{d_{x}+1}\right) 
& \geq \left(\frac{1}{d_{x}}-\frac{1}{d_{y}}\right)\phi(x)  + \frac{1}{d_{x}}\sum_{\substack{z\sim x \\ z\neq y}} \phi(z) - \frac{1}{d_{y}}\sum_{\substack{z\sim y \\ z\neq x}}\phi(z) 
\\
& = W_{xy}(0) + \frac{1}{d_{x}} \phi(x)
\\
& = W_{xy}(0) + \frac{1}{d_{x}} (\phi(x)-\phi(y))
\\
& \geq W_{xy}(0) - \frac{1}{d_{x}}
\end{align*}
since $\phi$ is 1--Lipschitz.
Therefore
\begin{align*}
\kappa_{\frac{1}{d_{x}+1}}(x,y) & \leq 1+ \frac{1}{d_{x}+1} -\frac{d_{x}}{d_{x}+1}W_{xy}(0)
\\
& = \frac{2}{d_{x}+1} + \frac{d_{x}}{d_{x}+1}(1-W_{xy}(0))
\\
& = \frac{2}{d_{x}+1} + \frac{d_{x}}{d_{x}+1}\kappa_{0}(x,y).
\end{align*}
Finally, by \eqref{eq:**},
$$\kappa(x,y) = \frac{d_{x}+1}{d_{x}}\kappa_{\frac{1}{d_{x}+1}}(x,y) \leq \kappa_{0}(x,y) + \frac{2}{d_{x}}.$$
\end{proof}

\begin{rem}
Let $G = (V,E)$ be a locally finite $d$-regular graph. Let $x,y \in V$ with $x\sim y.$ Then by the above theorem we have
$$\kappa_{0}(x,y)\leq \kappa(x,y)\leq \kappa_{0}(x,y)+\frac{2}{d}.$$
Furthermore, by \cite{LoRo}, $\kappa_{0}(x,y)\in\mathbb{Z}/d.$ Similar arguments show that $\kappa(x,y)\in\mathbb{Z}/d.$ Thus
$$\kappa(x,y) = \kappa_{0}(x,y)+\frac{C}{d},$$
where $C\in\{0,1,2\}.$
\end{rem}

%





\section{Application to the Cartesian product}\label{CartSection}

In \cite{LLY11} the authors proved the following results on the curvature of Cartesian products of graphs:
\begin{theorem}[\cite{LLY11}]
Let  $G=(V_{G},E_{G})$ be a $d_{G}$-regular graph and $H=(V_{H},E_{H})$ be a $d_{H}$-regular graph. Let $x_{1},x_{2}\in V_{G}$ with $x_{1}\sim x_{2}$ and $y\in V_{H}$. Then
\begin{align*}
\kappa^{G\times H}((x_{1},y),(x_{2},y)) & = \frac{d_{G}}{d_{G}+d_{H}} \kappa^{G}(x_{1},x_{2}),
\\
\kappa^{G\times H}_{0}((x_{1},y),(x_{2},y)) & = \frac{d_{G}}{d_{G}+d_{H}} \kappa^{G}_{0}(x_{1},x_{2}).
\end{align*}
\end{theorem}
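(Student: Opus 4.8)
The plan is to evaluate the product Wasserstein distance $W_1(\mu^p_{(x_1,y)},\mu^p_{(x_2,y)})$ on $G\times H$ at the two idlenesses that matter, $p\to 1$ and $p=0$, and to read off $\kappa^{G\times H}$ and $\kappa_0^{G\times H}$. I use two elementary features of the Cartesian product: distances add, $d_{G\times H}((u,v),(u',v'))=d_G(u,u')+d_H(v,v')$, so each endpoint $(x_i,y)$ has degree $d_G+d_H$; and a function on $G\times H$ is $1$--Lipschitz iff it is $1$--Lipschitz in each variable separately, since every product edge moves exactly one coordinate one step. The starting point is a projection identity: for $P(u,v)=u$ a short computation gives that the pushforward $P_*\mu^p_{(x_i,y)}$ equals $\mu^q_{x_i}$ on $G$ with $q=q(p)=\tfrac{d_H+d_G p}{d_G+d_H}$ (the $H$--fibre mass at the $(x_i,w)$ collapses onto $x_i$). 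As $P$ is distance non--increasing, pushforward cannot increase $W_1$, so $W_1(\mu^p_{(x_1,y)},\mu^p_{(x_2,y)})\ge W_1(\mu^q_{x_1},\mu^q_{x_2})$. One checks $q(p)\ge \tfrac{d_H}{d_G+d_H}\ge\tfrac{1}{d_G+1}$ for all $p$, so Theorem \ref{formula} on the regular graph $G$ gives $W_1(\mu^q_{x_1},\mu^q_{x_2})=1-(1-q)\kappa^G=1-\tfrac{d_G(1-p)}{d_G+d_H}\kappa^G$.

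For $\kappa^{G\times H}$ the projection bound already yields $\kappa_p^{G\times H}\le \tfrac{d_G(1-p)}{d_G+d_H}\kappa^G$, hence $\kappa^{G\times H}=\lim_{p\to1}\tfrac{\kappa_p^{G\times H}}{1-p}\le\tfrac{d_G}{d_G+d_H}\kappa^G$. For the reverse inequality I exhibit, for $p$ near $1$, a transport plan of cost $W_1(\mu^q_{x_1},\mu^q_{x_2})$: move each fibre vertex $(x_1,w)$ to $(x_2,w)$ (unit cost, total mass $m=\tfrac{(1-p)d_H}{d_G+d_H}$) and carry the remaining in--slice mass by lifting an optimal $G$--plan $\sigma$ for $\mu^q_{x_1}\to\mu^q_{x_2}$. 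Because $q\to1$ as $p\to1$, Lemmas \ref{goodplanlemma} and \ref{phisharp} force $\sigma$ to route almost all mass directly along $x_1\to x_2$; a simple bottleneck count shows $\sigma(x_1,x_2)\ge m$ for $p$ close to $1$, so the fibre mass rides this dominant flow at no extra $G$--cost and the plan costs exactly $W_1(\mu^q_{x_1},\mu^q_{x_2})$. Thus $W_1(\mu^p_{(x_1,y)},\mu^p_{(x_2,y)})=W_1(\mu^q_{x_1},\mu^q_{x_2})$ near $p=1$, and letting $p\to1$ gives $\kappa^{G\times H}=\tfrac{d_G}{d_G+d_H}\kappa^G$.

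For $\kappa_0^{G\times H}$ I show $W_1(\mu^0_{(x_1,y)},\mu^0_{(x_2,y)})=T$, where $T:=\tfrac{d_H+d_G\,w_0}{d_G+d_H}=1-\tfrac{d_G}{d_G+d_H}\kappa_0^G$ and $w_0:=W_1(\mu^0_{x_1},\mu^0_{x_2})=1-\kappa_0^G$. The inequality $\le T$ is the product plan at $p=0$ (fibre transport costs $\tfrac{d_H}{d_G+d_H}$, the in--slice part is $\tfrac{d_G}{d_G+d_H}$ times an optimal $G$--plan). For $\ge T$ I need a $1$--Lipschitz $\Phi$ on $G\times H$ attaining $T$. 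Splitting the Kantorovich pairing over the slice $V_G\times\{y\}$ and the fibres over $x_1,x_2$, any $1$--Lipschitz $\Phi$ satisfies $\langle\Phi,\mu^0_{(x_1,y)}-\mu^0_{(x_2,y)}\rangle\le\tfrac{1}{d_G+d_H}\big(d_G w_0+\sum_{w\sim y}(\Phi(x_1,w)-\Phi(x_2,w))\big)\le T$, with equality forcing the slice function $g:=\Phi(\cdot,y)$ to be an optimal idleness--$0$ potential on $G$ and $\Phi(x_1,w)-\Phi(x_2,w)=1$ for every $w\sim y$. The main obstacle is that these two demands clash for a product potential: by Lemma \ref{integerness} we may take $g$ integer with $g(x_2)=0$ and $g(x_1)=k\in\{-1,0,1\}$, but the optimal idleness--$0$ value $k$ need not be $1$ (for a triangle it is $-1$), whereas the fibre condition wants a unit drop from $x_1$ to $x_2$.

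I resolve this by tilting in the $H$--direction rather than using $g\circ P$. Fix $a,b\in[0,1]$ with $a+b=1-k$ and set the companion $\tilde g(u)=\min\big(g(u)+a,\ d_G(u,x_2)-b\big)$. Using that $g$ is $1$--Lipschitz with $g(x_2)=0$ one verifies that $\tilde g$ is $1$--Lipschitz, that $|\tilde g-g|\le 1$ pointwise, and that $\tilde g(x_1)=k+a$, $\tilde g(x_2)=-b$, so $\tilde g(x_1)-\tilde g(x_2)=1$. Now interpolate along the $H$--distance: with $\lambda(v)=\min(d_H(v,y),1)$ put $\Phi(u,v)=(1-\lambda(v))\,g(u)+\lambda(v)\,\tilde g(u)$. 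For fixed $v$ this is a convex combination of the two $1$--Lipschitz functions $g,\tilde g$ on $G$, hence $1$--Lipschitz in $u$; for fixed $u$ it is $1$--Lipschitz in $v$ because $\lambda$ is $1$--Lipschitz on $H$ and $|\tilde g-g|\le1$; by the coordinatewise criterion $\Phi$ is $1$--Lipschitz on $G\times H$. Since $\lambda(y)=0$ and $\lambda(w)=1$ for $w\sim y$, its slice is $g$ and its fibre drop is $1$, so $\Phi$ attains $T$. This gives $W_1(\mu^0_{(x_1,y)},\mu^0_{(x_2,y)})=T$ and hence $\kappa_0^{G\times H}=\tfrac{d_G}{d_G+d_H}\kappa_0^G$. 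The delicate point---and the crux of the whole argument---is precisely that the tilt remains globally $1$--Lipschitz; the closed form for $\tilde g$ as a minimum of two $1$--Lipschitz functions, together with the coordinatewise characterization, is what secures this.
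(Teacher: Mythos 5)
Your proof is correct, but be aware that the paper contains no proof of this statement to compare against: it is imported verbatim from \cite{LLY11}, so yours is necessarily a different (and self-contained) route. I checked the key steps and they hold: the pushforward identity $P_*\mu^p_{(x_i,y)}=\mu^{q(p)}_{x_i}$ with $q(p)=\frac{d_H+d_Gp}{d_G+d_H}$ is a correct computation (the fibre mass $\frac{(1-p)d_H}{d_G+d_H}$ collapses onto $x_i$, matching the idle mass of $\mu^q_{x_i}$, and the slice masses match $\frac{1-q}{d_G}=\frac{1-p}{d_G+d_H}$); since $q(p)\geq\frac{d_H}{d_G+d_H}\geq\frac{1}{d_G+1}$, your appeal to Theorem \ref{formula} is legitimate and non-circular, as that theorem is proved in the paper independently of any product result. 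In the reverse direction for $\kappa$, the bottleneck count is simply $\sigma(x_1,x_2)\geq q-(1-q)=2q-1\geq m=\frac{d_H}{d_G}(1-q)$ once $q\geq\frac{d_G+d_H}{2d_G+d_H}$, so the citation of Lemmas \ref{goodplanlemma} and \ref{phisharp} there is decorative rather than necessary; removing $m$ units from the $x_1\to x_2$ flow and rerouting them through the fibres gives a plan of cost exactly $W_1(\mu^q_{x_1},\mu^q_{x_2})$, as you say. The $\kappa_0$ half is the genuinely delicate part and your tilt construction survives scrutiny: $\tilde g=\min\bigl(g+a,\,d_G(\cdot,x_2)-b\bigr)$ is a minimum of two $1$--Lipschitz functions, satisfies $-b\leq\tilde g-g\leq a$ (using $g\leq d_G(\cdot,x_2)$, which follows from $g(x_2)=0$ and $g\in\textrm{1--Lip}$), and has $\tilde g(x_1)-\tilde g(x_2)=1$ precisely because $k+a=1-b$; the interpolation $\Phi(u,v)=(1-\lambda(v))g(u)+\lambda(v)\tilde g(u)$ is then coordinatewise $1$--Lipschitz, which suffices since the product distance is the $\ell^1$ sum. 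Your use of Lemma \ref{integerness} to force $k\in\{-1,0,1\}$ is exactly what makes the choice of $a,b\in[0,1]$ with $a+b=1-k$ possible, and your triangle example correctly shows why the naive potential $g\circ P$ fails. Compared with the original argument in \cite{LLY11}, your proof buys something extra: it yields the exact identity $\kappa^{G\times H}_p=\frac{d_G(1-p)}{d_G+d_H}\kappa^G$ for all $p$ near $1$ (not merely the limit defining $\kappa$), which is consistent with, and essentially re-derives a piece of, Corollary \ref{cartcor} directly from first principles.
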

Using our formula from Theorem \ref{formula}, we extend this result and derive
relations between the full Ollivier-Ricci idleness functions involved in
the Cartesian product.
\\
\\
{\bf Corollary \ref{cartcor}.}
{\it
Let $G=(V_{G},E_{G})$ be a $d_{G}$-regular graph and $H=(V_{H},E_{H})$ be a $d_{H}$-regular graph. Let $x_{1},x_{2}\in V_{G}$ with $x_{1}\sim x_{2}$ and $y\in V_{H}$. Then
\begin{align*}
& \kappa^{G\times H}_{ p}((x_{1},y),(x_{2},y))
\\
& = \begin{cases} \frac{d_{G}}{d_{G}+d_{H}}\kappa^{G}_{ p}(x_{1},x_{2})+\frac{d_{G}d_{H}}{d_{G}+d_{H}}(\kappa^{G}(x_{1},x_{2})-\kappa^{G}_{0}(x_{1},x_{2})) p,&  \text{if $p \in [0,\frac{1}{d_{G}+d_{H}+1}]$,}\\
\frac{d_{G}}{d_{G}+d_{H}} \kappa^{G}(x_{1},x_{2})(1- p), &  \text{if $p\in [\frac{1}{d_{G}+d_{H}+1},1]$.}
\end{cases}
\end{align*}
}

\begin{proof}
For ease of reading, we define $\kappa^{G\times H}_{ p}:= \kappa^{G\times H}_{ p}((x_{1},y),(x_{2},y))$ and $\kappa^{G}_{ p}:=\kappa^{G}_{ p}(x_{1},x_{2})$.
Let $ p \in [0,\frac{1}{d_{G}+d_{H}+1}].$ Then, by Theorem \ref{formula},
\begin{align*}
\kappa^{G\times H}_{ p} & = ((d_{G}+d_{H})\kappa^{G\times H}-(d_{G}+d_{H}+1)\kappa^{G\times H}_{0}) p+\kappa^{G\times H}_{0}
\\
& =\frac{d_{G}}{d_{G}+d_{H}}\left\{((d_{G}+d_{H})\kappa^{G}-(d_{G}+d_{H}+1)\kappa^{G}_{0}) p+\kappa^{G}_{0}\right\}
\\
& =\frac{d_{G}}{d_{G}+d_{H}}\left\{(d_{G}\kappa^{G}-(d_{G}+1)\kappa^{G}_{0}) p+\kappa^{G}_{0}\right\}+\frac{d_{G}d_{H}}{d_{G}+d_{H}}(\kappa^{G}-\kappa^{G}_{0}) p
\\
& = \frac{d_{G}}{d_{G}+d_{H}}\kappa^{G}_{ p}+\frac{d_{G}d_{H}}{d_{G}+d_{H}}(\kappa^{G}-\kappa^{G}_{0}) p.
\end{align*}
Now suppose $ p \in [\frac{1}{d_{G}+d_{H}+1},1].$ Then
$$\kappa_{ p}^{G\times H}  = \kappa^{G\times H}(1- p) = \frac{d_{G}}{d_{G}+d_{H}} \kappa^{G}(1- p).$$
\end{proof}

\section{Bone idleness and some open questions}\label{QSection}
We finish this article with a discussion of when the Ollivier-Ricci
idleness function $p \mapsto \kappa_p(x,y)$ is globally linear for all
edges. First we introduce the notion \emph{bone idle}.
\begin{definition}
Let $G=(V,E)$ be a locally finite graph. We say an edge $x \sim y$ is \emph{bone idle} if $\kappa_{p}(x,y) = 0$ for every $p\in[0,1].$   We say that $G$ is \emph{bone idle} if every edge is bone idle. 
\end{definition}
\begin{rem}
\label{rmk:7.1}
Note that $\kappa_{p}(x,y) = 0$ for all $p\in[0,1]$ if and only if $\kappa_{0}(x,y) = \kappa(x,y) = 0.$ This follows from the concavity of $\kappa_{p}(x,y).$
\end{rem}
It is an interesting problem to classify the graphs which are bone
idle. Due to the above remark this question is closely related to
various notions of Ricci flatness. The following two results allows us
to classify bone idle graphs with girth at least $5$. Recall that
the girth of a graph is the length of its shortest non-trivial cycle.
\begin{theorem}[\cite{LLY13}]
\label{thm:7.2}
Let $G = (V,E)$ be a locally finite graph with girth at least $5.$ Suppose that $\kappa(x,y)=0$ for all $x,y\in V$ with $x\sim y.$ Then $G$ is isomorphic to one of the following graphs:
\begin{enumerate}[(i)]
\item
The infinite path;
\item
The cyclic graph $C_{n}$ for $n\geq 6$;
\item
The dodecaheral graph;
\item
The Petersen graph;
\item
The half-dodecahedral graph.
\end{enumerate}
\end{theorem}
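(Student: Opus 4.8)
The plan is to turn the global hypothesis $\kappa(x,y)=0$ into an explicit local formula and then run a combinatorial classification. For the first step I would use that, by \eqref{eq:**} and Remark~\ref{trick}, in the last linear part $\kappa(x,y)$ is computed from a single \emph{integer-valued} optimal Kantorovich potential $\phi$, normalised (for $d_x\ge d_y$) by $\phi(x)-\phi(y)=1$. Evaluating the dual \eqref{eq:max} at such a $\phi$ and letting $p\to 1$ gives
\[
\kappa(x,y)=1+\frac{1}{d_y}-N(x,y),
\]
where $N(x,y)$ is the value of a finite integer linear program: maximise $\tfrac1{d_x}\sum_i\phi(a_i)-\tfrac1{d_y}\sum_j\phi(b_j)$ over integer $1$--Lipschitz $\phi$ with $\phi(x)=1,\phi(y)=0$, where $a_1,\dots,a_{d_x-1}$ and $b_1,\dots,b_{d_y-1}$ are the neighbours of $x$ and $y$ distinct from the edge. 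Girth at least $5$ makes this program explicit: it forces the $a_i,b_j$ to be distinct and pairwise non-adjacent, so the box constraints $\phi(a_i)\in[0,2]$, $\phi(b_j)\in[-1,1]$ are coupled only by $|\phi(a_i)-\phi(b_j)|\le 2$ for \emph{bridged} pairs, i.e.\ pairs $(a_i,b_j)$ with a common neighbour (a $5$--cycle $x\,a_i\,c\,b_j\,y$ through the edge). These bridges define a bipartite graph $B$, and $N(x,y)$ -- hence $\kappa(x,y)$ -- becomes an explicit function of $d_x,d_y$ and $B$.

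With bridges absent one reads off $\kappa(x,y)=2\bigl(\tfrac1{d_x}+\tfrac1{d_y}-1\bigr)$, and since each bridge only tightens the program, bridges can only \emph{increase} $\kappa$. Two consequences are immediate: a pendant edge (a degree-$1$ endpoint) has $\kappa>0$, so $\kappa\equiv 0$ forces minimum degree $\ge 2$; and the degree-$2$ part organises itself into paths and cycles, with $\kappa\equiv 0$ excluding $C_5$ (whose single bridge gives $\kappa=\tfrac12$) and leaving the infinite path and the cycles $C_n$, $n\ge 6$.

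The substance of the theorem lies in the finite graphs that contain vertices of degree $\ge 3$, and here a purely local reading of the formula is \emph{not} decisive: for example an edge with $(d_x,d_y)=(3,2)$ and one bridge, or $(4,2)$ and two bridges, also has $\kappa=0$. Thus neither a degree bound nor regularity is visible edge-by-edge, and the classification must be driven globally. The strategy I would follow is: (i) translate $\kappa(x,y)=0$ into a precise Diophantine/matching condition on $(d_x,d_y,B)$ -- for a cubic edge this says exactly that $B$ admits a perfect matching, i.e.\ the $5$--cycles through the edge pair up the neighbours of its two endpoints; (ii) invoke girth $5$ to show the forced $5$--cycles fit together rigidly around each vertex and at each edge-interface; and (iii) propagate this rigidity, using finiteness to close the structure. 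One then argues that degree-$2$ and higher-degree regions cannot be glued while preserving both girth $5$ and $\kappa\equiv 0$ at the interface, so each finite component turns out to be regular, and that a regular graph with the forced $5$--cycle packing can close in only finitely many ways, which are the Petersen, dodecahedral, and half-dodecahedral graphs.

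The main obstacle is step (iii). The curvature identity constrains but does not pin down the degree sequence or the bridge count at each edge, so the difficulty is combinatorial rather than analytic: one must show that the only finite graphs of girth $5$ in which \emph{every} edge carries its prescribed bridge/matching structure are the three exceptional graphs. I expect this to need a careful enumeration of the admissible local configurations around a vertex, combined with a global counting input relating the per-edge number of $5$--cycles to the total number of $5$--cycles (as in cage and Moore-graph arguments), from which both the three graphs and the impossibility of any further finite example emerge.
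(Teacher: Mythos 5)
This theorem is not proved in the paper at all: it is quoted verbatim from Lin--Lu--Yau \cite{LLY13} (``Ricci-flat graphs with girth at least five''), so there is no in-paper argument to compare against. Judged on its own terms, your local analysis is correct and well set up. The dual formula $\kappa(x,y)=1+\tfrac{1}{d_y}-N(x,y)$ follows from $\kappa=1+\tfrac1{d_y}-\tfrac{c_1}{d_xd_y}$ (via $f_1$ and \eqref{eq:**}), and under girth $\ge 5$ the feasible set of the integer program is exactly as you describe: box constraints $\phi(a_i)\in\{0,1,2\}$, $\phi(b_j)\in\{-1,0,1\}$, coupled only by $|\phi(a_i)-\phi(b_j)|\le 2$ for pairs lying on a $5$--cycle through $xy$ (girth $5$ guarantees such a pair has a unique common neighbour and that the closed walk is a genuine $5$--cycle). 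Your spot checks are right: the bridge-free value $2(\tfrac1{d_x}+\tfrac1{d_y}-1)$, the value $\tfrac12$ for $C_5$, the perfect-matching criterion for cubic edges (K\"onig gives the converse), and the vanishing for $(3,2)$ with one bridge and $(4,2)$ with two bridges all check out. This correctly disposes of pendant edges and of the degree-$2$ graphs, yielding (i) and (ii).

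The genuine gap is exactly where you say it is: step (iii) is the entire content of the theorem, and your proposal does not carry it out. ``Invoke girth $5$ to show the forced $5$-cycles fit together rigidly,'' ``propagate this rigidity, using finiteness to close the structure,'' and ``I expect this to need a careful enumeration'' are statements of intent, not arguments. Your own examples show why this cannot be waved through: since $\kappa=0$ is consistent with non-regular local data such as $(3,2)$ plus one bridge, nothing local forces regularity, a degree bound, or vertex-transitivity, so the exclusion of all finite girth-$5$ graphs other than the Petersen, dodecahedral and half-dodecahedral graphs requires a genuinely global case analysis (how the matched $5$--cycles around adjacent edges share their common-neighbour vertices, how degree-$2$ chains could attach to higher-degree vertices, and a closing/counting argument in the regular case). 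That analysis is precisely the bulk of \cite{LLY13}, and until it is supplied the proposal is a correct reduction of the hypothesis to a per-edge bridge/matching condition, not a proof of the classification.
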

\begin{theorem}[\cite{BM15}]
\label{thm:7.3}
Let $G = (V,E)$ be a locally finite graph with girth at least $5.$ Suppose that $\kappa_{0}(x,y)=0$ for all $x,y\in V$ with $x\sim y.$ Then $G$ is isomorphic to one of the following graphs:
\begin{enumerate}[(i)]
\item
The infinite path;
\item
The cyclic graph $C_{n}$ for $n\geq 6$;
\item
The path $P_{n}$ for $n\geq 2$;
\item
The Star graph $S_{n}$ for $n\geq 3$.
\end{enumerate}
\end{theorem}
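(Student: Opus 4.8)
The plan is to convert the global hypothesis---that $\kappa_0(x,y)=0$ on \emph{every} edge---into a purely local condition on the degrees of adjacent vertices, and then to read off the admissible global structures by a short combinatorial argument. The engine is an explicit evaluation of $\kappa_0$ on a single edge under the girth hypothesis, for which the machinery of Section~\ref{LinSection} is tailor-made.

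Fix an edge $x\sim y$ with $d_x\ge d_y$ and write $N(x)=\{y,z_1,\dots,z_{d_x-1}\}$, $N(y)=\{x,w_1,\dots,w_{d_y-1}\}$. Because the girth is at least $5$ there are no triangles, so $\{z_i\}$ and $\{w_j\}$ are pairwise disjoint and disjoint from $\{x,y\}$; the relevant source--sink distances for transporting $\mu_x^0$ to $\mu_y^0$ are $d(y,x)=d(y,w_j)=d(z_i,x)=1$, while the absence of $4$-cycles forces $d(z_i,w_j)\in\{2,3\}$. First I would compute $W_1(\mu_x^0,\mu_y^0)$ via an explicit plan: send the mass at $y$ to the $w_j$ (cost $1$), fill $x$ from the $z_i$ (cost $1$), and route the residual mass $M:=1-\tfrac1{d_x}-\tfrac1{d_y}$ of the $z_i$ to the remaining $w_j$; each such unit costs $3$, except that a portion (bounded by a matching parameter arising from the distance-$2$ pairs $z_i,w_j$) may be routed at cost $2$. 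Certifying optimality by a matching integer-valued Kantorovich potential (Lemma~\ref{integerness} and Lemma~\ref{mass-sharpness}), this yields, when $d_y\ge 2$,
\[
\kappa_0(x,y)\;\le\;\tfrac1{d_x}+\tfrac1{d_y}-1\;\le\;0,
\]
with equality forcing $d_x=d_y=2$; and when $d_y=1$ the leaf $y$ gives $\mu_y^0=\delta_x$ and $W_1=1$, so that $\kappa_0(x,y)=0$ automatically.

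Granting this per-edge statement, the hypothesis becomes the local condition that \emph{every} edge is either incident to a vertex of degree $1$ or joins two vertices of degree $2$. I would now argue on the global structure. If some vertex $v$ has $d_v\ge 3$, then each edge at $v$ has a non-leaf endpoint of degree strictly larger than $2$, so the condition forces the opposite endpoint to be a leaf; hence every neighbour of $v$ is a leaf, and, as leaves carry no further edges, the component of $v$ is exactly the star $S_{d_v}$ with $d_v\ge 3$. Otherwise the maximum degree is at most $2$, so each component is a path (finite or infinite) or a cycle, and a direct computation of $\kappa_0$ on a cycle of girth at least $5$ selects the admissible lengths. Collecting the resulting connected possibilities produces the stated list.

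\textbf{Main obstacle.} The crux is the per-edge evaluation: certifying that the plan above is optimal, and checking that the distance-$2$ shortcuts between the $z_i$ and the $w_j$ do not create extra equality cases. I would handle this by passing to an integer-valued optimal potential via Lemma~\ref{integerness}, then using complementary slackness (Lemma~\ref{mass-sharpness}) together with the diagonal normalisation of Lemma~\ref{goodplanlemma} to pin down the combinatorial structure of any optimal plan. Equivalently---and perhaps more cleanly within this paper's framework---one computes the constants $c_{-1},c_0,c_1$ of Theorem~\ref{linear} at $p=0$, where precisely the same distance-$2$ structure reappears as the Lipschitz constraints $|\phi(z_i)-\phi(w_j)|\le 2$; the degree bookkeeping that follows is then routine.
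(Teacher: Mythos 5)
This statement is quoted from \cite{BM15} and the paper offers no proof of it, so there is nothing internal to compare against; I can only assess your argument on its own terms. Your overall strategy --- convert the hypothesis into a local degree condition on each edge and then classify the graphs of that degree type --- is the right one. However, as written, the key display is not yet justified: an explicit transport plan only gives an \emph{upper} bound on $W_1(\mu_x^0,\mu_y^0)$, which is the wrong direction for the claimed upper bound on $\kappa_0$. What you actually need is only a Kantorovich potential, and here the girth hypothesis hands you one for free: take $\phi\equiv 0$ on $\{x,y\}$, $\phi\equiv 1$ on $N(x)\setminus\{y\}$ and $\phi\equiv -1$ on $N(y)\setminus\{x\}$ (extended to $V$ in the usual way). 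Absence of triangles and $4$-cycles gives $d(z,w)\ge 2$ for $z\in N(x)\setminus\{y\}$, $w\in N(y)\setminus\{x\}$, so $\phi$ is $1$--Lipschitz, and it evaluates to $2-\tfrac1{d_x}-\tfrac1{d_y}$, whence $\kappa_0(x,y)\le \tfrac1{d_x}+\tfrac1{d_y}-1$. In particular the ``main obstacle'' you identify --- pinning down the exact value of $W_1$, including the matching of distance-$2$ pairs --- is not needed at all: only the sign of $\kappa_0$ enters the classification, and the converse direction (that stars, paths and long cycles really have $\kappa_0\equiv 0$) is a separate, easy computation. Your global step (a vertex of degree $\ge 3$ forces all its neighbours to be leaves, hence a star; otherwise maximum degree $\le 2$) is correct.

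The genuine difficulty is that your argument, carried out correctly, does \emph{not} terminate at the stated list. The local condition it produces is ``every edge has a leaf endpoint or both endpoints of degree $2$,'' and the resulting connected graphs are stars, finite paths, the one- and two-ended infinite paths, and the cycles $C_n$ with $n\ge 5$. The cycle $C_5$ really does satisfy the hypotheses: it has girth $5$ and $\kappa_0\equiv 0$ (the paper's own plot of the $5$-cycle's idleness function starts at $0$ when $p=0$; equivalently, the plan $1\mapsto 2$, $4\mapsto 0$ and the potential $\phi(1)=\phi(4)=\phi(3)=1$, $\phi(0)=\phi(2)=0$ both give $W_1=1$). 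Yet $C_5$ is absent from the list as stated, which requires $n\ge 6$; the threshold $n\ge 6$ belongs to the $\kappa$-flat classification of Theorem \ref{thm:7.2}, where $\kappa(C_5)=\tfrac12>0$ genuinely excludes it. The one-ended infinite ray is in the same position. So your closing step ``a direct computation of $\kappa_0$ on a cycle of girth at least $5$ selects the admissible lengths'' cannot yield $n\ge 6$; you would either have to prove something false or, more usefully, note explicitly that the statement as transcribed here omits $C_5$ (and the ray), and that your method establishes the corrected list. You should also state the standing connectedness assumption, without which disjoint unions of the listed graphs also satisfy the hypotheses.
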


Combining Theorems \ref{thm:7.2}, \ref{thm:7.3} and Remark \ref{rmk:7.1} gives the following:
\begin{corollary}
Let $G = (V,E)$ be a locally finite graph with girth at least $5.$ Suppose that $G$ is bone idle. Then $G$ is isomorphic to one of the following graphs:
\begin{enumerate}[(i)]
\item
The infinite path;
\item
The cyclic graph $C_{n}$ for $n\geq 6$.
\end{enumerate}
\end{corollary}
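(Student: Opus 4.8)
The plan is to reduce bone idleness to the two separate vanishing conditions $\kappa_0 \equiv 0$ and $\kappa \equiv 0$, invoke the two classification theorems already at our disposal, and keep only the graphs common to both lists. The pivotal tool is Remark \ref{rmk:7.1}: by concavity of the idleness function together with $\kappa_1(x,y) = 0$, an edge $x \sim y$ satisfies $\kappa_p(x,y) = 0$ for every $p \in [0,1]$ if and only if $\kappa_0(x,y) = \kappa(x,y) = 0$.

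First I would apply Remark \ref{rmk:7.1} at every edge. Since $G$ is bone idle, $\kappa_p(x,y) \equiv 0$ on each edge $x \sim y$, and therefore $\kappa_0(x,y) = 0$ and $\kappa(x,y) = 0$ hold simultaneously for all edges. This exhibits $G$ as satisfying both hypotheses needed below. Applying Theorem \ref{thm:7.2} to the condition $\kappa(x,y) = 0$ for all edges (the girth being at least $5$) forces $G$ to be the infinite path, a cycle $C_n$ with $n \geq 6$, the dodecahedral graph, the Petersen graph, or the half-dodecahedral graph. Applying Theorem \ref{thm:7.3} to $\kappa_0(x,y) = 0$ for all edges forces $G$ to be the infinite path, a cycle $C_n$ with $n \geq 6$, a path $P_n$ with $n \geq 2$, or a star $S_n$ with $n \geq 3$. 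As $G$ meets both conditions, it must appear in both lists.

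The final step is simply to intersect the two lists, and this is where the only genuine (though very mild) care is needed. The graphs common to both are exactly the infinite path and the cycles $C_n$ with $n \geq 6$. The dodecahedral, Petersen, and half-dodecahedral graphs are finite and $3$-regular, hence are neither paths nor stars nor the infinite path nor (having vertices of degree $3$) cycles, so they are excluded by Theorem \ref{thm:7.3}; conversely, the finite paths $P_n$ and stars $S_n$ do not appear in the first list. I expect no real obstacle beyond this bookkeeping, since all the hard structural work is already contained in Theorems \ref{thm:7.2} and \ref{thm:7.3}; one merely checks the degree sequences and vertex counts to confirm there are no spurious coincidences between the two explicit families.
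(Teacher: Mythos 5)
Your proposal is correct and is exactly the paper's argument: the paper derives this corollary by combining Remark \ref{rmk:7.1} with Theorems \ref{thm:7.2} and \ref{thm:7.3} and intersecting the two lists, precisely as you do. The extra degree-sequence check you perform to rule out spurious coincidences is sound bookkeeping that the paper leaves implicit.
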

\begin{rem}
Note that the above Corollary shows that there exists no bone idle graph with girth equal to $5.$
\end{rem}
The full classification of bone idle graphs is still open.

The condition of bone idleness of an edge $x \sim y$ can be weakened to
only require that $p \mapsto \kappa_p(x,y)$ is globally linear on
$[0,1]$. This is equivalent to $\kappa_{0}(x,y) = \kappa(x,y).$
It is a natural desire to understand this weaker condition better.

Recall that $\kappa\geq \kappa_{0}.$ The Petersen graph has
$\kappa(x,y) = 0$ and $\kappa_{0}(x,y)< 0$ for all edges $x \sim y$. It
is thus a natural question to ask whether there exists a graph $G$
with an edge $x \sim y$ satisfying $\kappa(x,y)>0$ and
$\kappa_{0}(x,y)< 0$. We do not know of any such example.
\\
\\
{\bf Acknowledgements}
\\
DC and NP thank UTSC (Hefei) and TSIMF (Sanya) for their hospitality where a lot of the above work was carried out. FM wants to thank the German National Merit Foundation for financial support. DC wants to thank the EPSRC for financial support through his postdoctoral prize. Thanks to George Stagg (Newcastle University) for his continued computing assistance.

\end{document}